\newtheorem{theorem}{Theorem}[section]
\newtheorem{lemma}[theorem]{Lemma}
\newtheorem{corollary}[theorem]{Corollary}
\newtheorem{proposition}[theorem]{Proposition}
\theoremstyle{definition}
\newtheorem{example}[theorem]{Example}
\newcommand{\C}{\mathbb C}
\newcommand{\N}{\mathbb N}
\newcommand{\Q}{\mathbb Q}
\newcommand{\Z}{\mathbb Z}
\renewcommand{\O}{\mathscr O}
\renewcommand{\ker}{\operatorname{Ker}}
\newcommand{\im}{\operatorname{Im}}
\newcommand{\supp}{\operatorname{supp}}
\newcommand{\id}{\operatorname{id}}
\begin{document}

\title{Extending holomorphic maps from Stein manifolds \\ into affine toric varieties}

\author{Richard L\"ark\"ang}
\address{Richard L\"ark\"ang, School of Mathematical Sciences, University of Adelaide, Adelaide SA 5005, Australia}
\curraddr{Department of Mathematics, University of Wuppertal, Gau\ss str.~20, 42119 Wuppertal, Germany, and Department of Mathematics, Chalmers University of Technology and the University of Gothenburg, 412 96 Gothenburg, Sweden}
\email{larkang@chalmers.se}

\author{Finnur L\'arusson}
\address{Finnur L\'arusson, School of Mathematical Sciences, University of Adelaide, Adelaide SA 5005, Australia}
\email{finnur.larusson@adelaide.edu.au}

\thanks{The authors were supported by Australian Research Council grant DP120104110.}

\subjclass[2010]{Primary 14M25.  Secondary 32E10, 32Q28}

\date{28 October 2014.  Most recent minor changes 24 January 2016}

\keywords{Stein manifold, Stein space, affine toric variety, holomorphic map, extension}

\begin{abstract}  
A complex manifold $Y$ is said to have the interpolation property if a holomorphic map to $Y$ from a subvariety $S$ of a reduced Stein space $X$ has a holomorphic extension to $X$ if it has a continuous extension.  Taking $S$ to be a contractible submanifold of $X=\C^n$ gives an ostensibly much weaker property called the convex interpolation property.  By a deep theorem of Forstneri\v c, the two properties are equivalent.  They (and about a dozen other nontrivially equivalent properties) define the class of Oka manifolds.  

This paper is the first attempt to develop Oka theory for singular targets.  The targets that we study are affine toric varieties, not necessarily normal.  We prove that every affine toric variety satisfies a weakening of the interpolation property that is much stronger than the convex interpolation property, but the full interpolation property fails for most affine toric varieties, even for a source as simple as the product of two annuli embedded in $\C^4$.
\end{abstract}

\maketitle
\tableofcontents

\section{Introduction} 
\label{sec:intro}

\noindent 
Modern Oka theory has evolved from Gromov's seminal work on the Oka principle \cite{Gromov1989}.\footnote{The monograph \cite{Forstneric2011} is a comprehensive reference on Oka theory.  See also the surveys \cite{Forstneric2013} and \cite{FL2011}.}  From one point of view, Oka theory is the study of interpolation and approximation problems for holomorphic maps from Stein spaces into suitable complex manifolds.  The goal, for suitable targets, is to show that such a problem can be solved as soon as there is no topological obstruction to its solution.  A complex manifold $Y$ is said to have the \textit{interpolation property} if a holomorphic map to $Y$ from a subvariety $S$ of a reduced Stein space $X$ has a holomorphic extension to $X$ if it has a continuous extension.  Taking $S$ to be a contractible submanifold of $X=\C^n$ gives an ostensibly much weaker property called the \textit{convex interpolation property}.  It is a deep theorem of Forstneri\v c that the two properties are equivalent \cite{Forstneric2009}.  In fact, by Forstneri\v c's work, a dozen or more properties of complex manifolds having to do with interpolation or approximation or both are mutually equivalent.  They define the class of Oka manifolds.  The prototypical examples of Oka manifolds are complex Lie groups and their homogeneous spaces.  Among other known examples are all smooth toric varieties (\cite{Larusson2011}, \cite[Theorem 2.17]{Forstneric2013}).  In particular, the smooth locus of a toric variety is Oka.

This paper is the first attempt to develop Oka theory for singular targets.  We focus on interpolation, but our interpolation results imply approximation results.  The simple example of the cusp $Y=\{(z,w)\in\C^2:z^2=w^3\}$ shows that Oka theory for singular targets will be different from the theory for smooth targets.  The identity map $Y\to Y$ has a continuous extension to $\C^2$ because $Y$ is contractible (so $\C^2$ retracts continuously onto $Y$), but it does not extend holomorphically to $\C^2$ because being singular, $Y$ is not a holomorphic or even a smooth retract of $\C^2$.  Using the normalisation map $\C\to Y$, $\zeta\mapsto(\zeta^3,\zeta^2)$, it is easily seen that $Y$ satisfies the interpolation property for normal sources $S$.  However, Theorem \ref{t:first-dichotomy}(b) below shows that there is more to Oka theory for singular targets than restricting the sources to being smooth.

The targets that we will study are affine toric varieties, always assumed irreducible but \textit{not necessarily normal} (\lq\lq nnn\rq\rq).  Our main results, stated precisely just below, may be roughly summarised as follows.
\begin{itemize}
\item  Every nnn affine toric variety satisfies a weakening of the interpolation property that is much stronger than the convex interpolation property.
\item  The full interpolation property fails for most nnn affine toric varieties, even for a source as simple as the product of two annuli embedded in $\C^4$.
\end{itemize}

Our main results are the following four theorems.

\begin{theorem}  \label{t:main-positive}
Let $Y$ be a nnn affine toric variety.  Let $S$ be a factorial subvariety of a reduced Stein space $X$ such that $H^p(X,\Z)\to H^p(S,\Z)$ is surjective for $p=0, 1, 2$.  Then every holomorphic map $S\to Y$ extends to a holomorphic map $X\to Y$.
\end{theorem}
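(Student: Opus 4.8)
The plan is to reformulate a map into $Y$ algebraically. Write $Y=\operatorname{Spec}\C[P]$, where $P$ is a finitely generated submonoid of a lattice $M\cong\Z^n$ that it generates as a group, and let $\chi^m$ ($m\in P$) be the characters. Since $Y$ is affine and $S$ is Stein, a holomorphic map $f\colon S\to Y$ is the same as a $\C$-algebra homomorphism $\C[P]\to\O(S)$, i.e.\ a homomorphism of multiplicative monoids $P\to(\O(S),\,\cdot\,)$, $m\mapsto f_m:=\chi^m\circ f$, with $f_0=1$ and $f_{m+m'}=f_mf_{m'}$; under any embedding $Y\hookrightarrow\C^k$ the defining equations of $Y$ are exactly the binomial relations of $P$, so this is a bijection. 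Extending $f$ thus means extending this monoid homomorphism to $P\to(\O(X),\,\cdot\,)$ restricting to the $f_m$ on $S$. The whole argument runs on the fact that $S$, as a closed subvariety of a Stein space, is itself Stein, so Cartan's Theorems A and B hold on both $S$ and $X$: in particular $H^q(\,\cdot\,,\O)=0$ for $q\ge1$, every $f_m$ extends individually to $X$, and the exponential sequence yields the two identifications $\operatorname{Pic}\cong H^2(\,\cdot\,,\Z)$ and $\O^*/\exp\O\cong H^1(\,\cdot\,,\Z)$ that carry the hypotheses.

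First I would isolate the divisorial part of $f$. Let $\sigma$ be the cone dual to $\operatorname{cone}(P)$, with rays $\rho$ and primitive generators $v_\rho$ in $N=\operatorname{Hom}(M,\Z)$. On the open set where $f$ lands in the torus the $f_m$ are units, so every $\operatorname{div}_S f_m$ is supported on the preimage of the toric boundary, and using the factoriality of $S$ one organizes these into finitely many effective Cartier divisors $D_\rho$ on $S$ with $\operatorname{div}_S f_m=\sum_\rho\langle m,v_\rho\rangle D_\rho$ for all $m\in P$. This is the crucial point where factoriality is used: it makes the $D_\rho$ Cartier, so that their classes $[\O_S(D_\rho)]\in\operatorname{Pic}(S)=H^2(S,\Z)$ make sense; these classes are generally nontrivial, although each combination $\sum_\rho\langle m,v_\rho\rangle[\O_S(D_\rho)]$ vanishes, being the class of the principal divisor $\operatorname{div}_S f_m$. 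I would now lift each $\O_S(D_\rho)$ to a line bundle $L_\rho$ on $X$, which is possible exactly because $H^2(X,\Z)\to H^2(S,\Z)$ is surjective, extend the section cutting out $D_\rho$ to a section of $L_\rho$ by Cartan's Theorem B, and so obtain effective divisors $\tilde D_\rho$ on $X$ restricting to the $D_\rho$. Prescribing $\operatorname{div}_X\tilde f_m=\sum_\rho\langle m,v_\rho\rangle\tilde D_\rho$ then fixes the intended divisor of each extension. With the divisors matched, each $\tilde f_m$ is determined up to a nowhere-zero factor that must restrict to $1$ on $S$; extending this residual unit datum is governed by $\O^*/\exp\O\cong H^1(\,\cdot\,,\Z)$, so it succeeds because $H^1(X,\Z)\to H^1(S,\Z)$ is surjective, while surjectivity of $H^0(X,\Z)\to H^0(S,\Z)$ lets the branch and root choices be made consistently over the components of $S$. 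Since $M$ is free, I would pin all of this down on a basis of $M$ and propagate by the group law.

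The step I expect to be the main obstacle is making the resulting $\tilde f_m$ respect every relation of $P$ at once: whenever $\sum a_im_i=\sum b_im_i$ in $P$ one needs $\prod\tilde f_{m_i}^{a_i}=\prod\tilde f_{m_i}^{b_i}$ on all of $X$, not merely on $S$. Matching divisors forces the two sides of each relation to have the same divisor on $X$, so they differ by a nowhere-zero factor equal to $1$ on $S$; the real content is to choose the lifts $L_\rho$, their extended sections, and the unit corrections \emph{coherently} as $\rho$ and $m$ vary, so that all of these factors are simultaneously $1$. I would organize this through a free presentation $\N^k\twoheadrightarrow P$ and the syzygies of $P$, reducing relation-compatibility to the vanishing of a finite obstruction assembled from the differences above. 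The advantage of working on a basis of $M$, where there are no relations, is that the extension is canonical once the finitely many lifts are chosen; the task is then to choose them so that the prescribed restrictions to $S$ and the effectivity of the $\tilde D_\rho$ hold together. Showing that the obstruction to such a coherent choice lives in $H^1(S,\Z)$ and $H^2(S,\Z)$ (with $H^0(S,\Z)$ accounting for the discrete choices), and is therefore killed by the three surjectivity hypotheses while effectivity is preserved, is the technical heart of the argument.
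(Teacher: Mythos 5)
Your overall architecture --- decompose the divisors of the components of $f$ into a fixed finite family of effective divisors on $S$, lift the corresponding line bundles to $X$ using surjectivity of $H^2(X,\Z)\to H^2(S,\Z)$, and clean up with units ($H^1$) and componentwise choices ($H^0$) --- is essentially the paper's strategy (its ``twisted factorisation''). But there are two genuine gaps. The first is the decomposition $\operatorname{div}_S f_m=\sum_\rho\langle m,v_\rho\rangle D_\rho$ over the \emph{rays} of $\sigma$ with $D_\rho$ effective: it does not exist in general. Along an irreducible hypersurface $Z\subset S$ contained in the common zero set, the vanishing orders of the $f_m$ define an element $v_Z$ of the semigroup $\sigma\cap N$, and your formula forces $v_Z$ to be an $\N$-linear combination of the primitive ray generators; but $\sigma\cap N$ is generated only by its Hilbert basis, which is strictly larger than the set of ray generators unless $\sigma$ is smooth. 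Concretely, for the cone $Y=\{z_1z_2=z_3^2\}$ and $f:\C\to Y$, $f(t)=(t,t,t)$, the valuation vector at $0$ is $(1,1,1)$, which is not an $\N$-combination of the ray generators $(2,0,1)$ and $(0,2,1)$. This is exactly why the paper indexes its divisors $D_1,\dots,D_m$ by the full Hilbert basis of $\ker A\cap\N^n$ (the matrix $B$), not by the rays; the number $m$ of divisors needed exceeds the number of rays in general.

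The second gap is the step you yourself flag as the ``technical heart'': choosing the lifted bundles, extended sections and unit corrections coherently so that \emph{every} binomial relation of $P$ holds on all of $X$. This is not a routine obstruction chase that lands in $H^1(S,\Z)$ and $H^2(S,\Z)$: if you lift each $\O_S(D_\rho)$ to $X$ arbitrarily, the tensor combinations of the lifted bundles prescribed by the relations will in general be nontrivial on $X$, so the would-be extensions are sections of nontrivial bundles and cannot even be compared as functions. The paper's resolution is structural: it first rewrites the tuple of bundles on $S$ as $L=N^E$, where the columns of $E$ form a basis of $\ker B$ (possible precisely because the tuple of first Chern classes lies in the kernel of $B$ acting on $H^2(S,\Z)^m$, and that kernel equals the image of $E$ by freeness of $\im B$); the relevant combinations $L^B$ are then \emph{canonically} trivial because $BE=0$, only the $\ell$ bundles $N_\lambda$ need to be lifted to $X$, and triviality of the extended combinations is automatic. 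The residual unit ambiguity is then absorbed by an explicit integer-matrix identity ($QC-I_n=BP$), not by an $H^1$ argument. Without some such mechanism your ``finite obstruction'' is not shown to vanish. Finally, you do not treat degenerate maps (those whose image misses the torus of $Y$) or the $(\C^*)^r$ factor of the normalisation; the paper needs a separate reduction for each (Lemma \ref{l:make-nondegenerate} and the splitting $\tilde Y=Y'\times(\C^*)^r$), and the $H^1$ hypothesis is used only for the latter.
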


As a very particular consequence, every nnn affine toric variety satisfies the convex interpolation property.  The key to the proof of the theorem is a new notion of a \textit{twisted factorisation} of a nondegenerate holomorphic map into $Y$ (see Theorem \ref{t:twisted-factorisation}).  We call a holomorphic map $f:S\to Y\subset\C^n$ \textit{nondegenerate} if the image by $f$ of each irreducible component of $S$ intersects the torus in $Y$.  Equivalently, no component of $f$ is identically zero on any component of $S$.  (As described in more detail at the beginning of Section \ref{sec:proofs}, we take a nnn affine toric variety $Y$ in $\C^n$ to be embedded in $\C^n$ as the zero set of a prime lattice ideal.  Then the torus in $Y$ is $Y\cap(\C^*)^n$.)

Recall that $S$ is factorial if the stalk $\mathscr O_x$ of the structure sheaf of $S$ is a factorial ring for every $x\in S$.  Then $S$ is normal and hence locally irreducible, so its connected components and irreducible components are the same.  Factoriality is a strong property, not much weaker than smoothness, but it is a natural assumption in Theorem \ref{t:main-positive}.

\begin{theorem}  \label{t:first-dichotomy}
Let $Y$ be a nnn affine toric variety of dimension $d$ in $\C^n$ with $0\in Y$.
\begin{enumerate}
\item[(a)]  Suppose that the normalisation of $Y$ is $\C^d$.  If $S$ is a normal subvariety of a reduced Stein space $X$, then every nondegenerate holomorphic map $S\to Y$ extends to a holomorphic map $X\to Y$.
\item[(b)]  Suppose that the normalisation of $Y$ is not $\C^d$.  There is a smooth surface $S$ in $\C^4$, biholomorphic to the product of two annuli, and a nondegenerate holomorphic map $S\to Y$ that does not extend to a holomorphic map $\C^4\to Y$.
\end{enumerate}
\end{theorem}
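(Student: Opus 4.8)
The plan is to prove part (a) by lifting through the normalisation, and to prove part (b) by exhibiting an invariant of a nondegenerate map that lives happily on a product of annuli but is forced to die on $\C^4$.

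For (a), I would use the finite normalisation map $\nu\colon\C^d\to Y$, which is biholomorphic over the torus $Y\cap(\C^*)^n$. Given a nondegenerate $f\colon S\to Y$, I first lift it over the dense open set $S_0=f^{-1}(Y\cap(\C^*)^n)$ to $\tilde f_0\colon S_0\to(\C^*)^d$, using that $\nu$ is invertible there. Since $\nu$ is finite, each coordinate $u_j$ of $\C^d$ is integral over the coordinate ring of $Y$, so $u_j\circ\tilde f_0$ satisfies a monic equation whose coefficients are polynomials in $f_1,\dots,f_n$, hence is locally bounded on $S$; as $S$ is normal, Riemann's extension theorem across the thin set $S\setminus S_0$ produces a holomorphic $\tilde f\colon S\to\C^d$ with $\nu\circ\tilde f=f$. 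Because $\C^d$ is just $d$ copies of $\C$, Cartan's Theorem B extends the components of $\tilde f$ from the subvariety $S$ to entire functions on $X$, giving $\tilde F\colon X\to\C^d$, and then $F=\nu\circ\tilde F\colon X\to Y$ is the desired extension. The only subtle point is the local boundedness of the lift, which is exactly where finiteness of the normalisation and normality of $S$ are used.

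For (b), the point of $\tilde Y\neq\C^d$ is that the cone of $Y$ is not unimodular, so the saturated monoid contains an element that is a proper fractional combination of the generators; equivalently $Y$ carries a nontrivial toric datum (a non-Cartier Weil divisor class, or a cyclic toric cover $\C^d\to\tilde Y$) that is invisible on the torus. I would read off from the twisted factorisation of $f$ (Theorem \ref{t:twisted-factorisation}) a class built from this datum, living in $H^1$ or in $\operatorname{Pic}$ of the source. If $f$ extended to $F\colon\C^4\to Y$, this class would be pulled back from $\C^4$, hence would vanish, since $\C^4$ is contractible: $H^1(\C^4,\Z)=0$ and $\operatorname{Pic}(\C^4)=H^2(\C^4,\Z)=0$. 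The product of two annuli $S\simeq A_1\times A_2$ is chosen precisely because $H^1(S,\Z)=\Z^2$ and $\operatorname{Pic}(S)=H^2(S,\Z)=\Z$ are as large as possible, leaving room to realise a nonzero class. Concretely I would embed $S$ as a closed submanifold of $\C^4$ so that the two core loops of the annuli bound holomorphic $2$-chains, and choose $f$ from the non-unimodular relation so that any extension would have to extract a genuine $k$-th root of a holomorphic function whose vanishing order along these chains is, by the argument principle, not divisible by $k$.

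The hard part, and the step I expect to be the main obstacle, is the \emph{rigidity} of this obstruction. A priori an extension $F$ need not keep $S$ inside the torus: it could try to absorb the offending winding by letting a coordinate $F_i$ vanish on a hypersurface of $\C^4$ disjoint from $S$, thereby ``folding'' $S$ towards a coordinate subvariety of $Y$ and cancelling the naive class. Ruling this out is the whole difficulty, and it is where the precise toric geometry must enter: the non-unimodularity has to be arranged so that the required root cannot be supplied by any single coordinate of $Y$, and the bounding chains must meet the relevant divisors so as to pin down \emph{local}, not merely total, vanishing orders. I expect the cleanest case to be when the class group of $\tilde Y$ has a free summand (non-simplicial cone), where the obstruction is non-torsion and survives directly in $\operatorname{Pic}(S)=\Z$; the genuinely delicate case is the finite class group (a quotient singularity such as $\{xy=z^2\}$), where the naive class is torsion and dies in $\operatorname{Pic}(S)$, so that one must instead work through the Galois cover $\C^d\to\tilde Y$, choosing $f$ whose induced $\Z/k$-cover of $S$ is nontrivial in $H^1(S,\Z/k)$ and invoking purity of the branch locus over the simply connected $\C^4$ to force any extension to trivialise this cover.
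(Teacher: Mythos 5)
Your part (a) is correct and is essentially the paper's argument: nondegeneracy plus normality of $S$ lets $f$ lift through the normalisation $\C^d\to Y$, and Cartan extends the lift. The extra detail you give about local boundedness via integrality is a fine way to justify the lifting step.

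Part (b), however, is a strategy outline with the essential content missing, and you say as much yourself (``Ruling this out is the whole difficulty''). Two things are absent. First, the construction of $f$: the paper does not work with winding numbers, $k$-th roots, or $H^1(S,\Z/k)$; it works entirely in $\operatorname{Pic}(S)\cong H^2(S,\Z)\cong\Z$. On the product of annuli one finds four disjoint smooth curves $W_1,W_2,Z_1,Z_2$ with $c_1(W_1)=c_1(W_2)=-c_1(Z_1)=-c_1(Z_2)\neq 0$ (splitting the principal divisors $\{z_1-z_2=1+\epsilon\}$ into two non-principal halves, for two values of $\epsilon$). The map $f$ is then built from a vector-valued divisor $BD'$ with $D'=w_1W_1+w_2W_2+z_1Z_1+z_2Z_2$, which is principal because $B(w-z)=0$. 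Second, and this is the real engine, a purely combinatorial lemma about the Hilbert basis: since $\ker B\neq 0$ there is a \emph{minimal} vector $v\in\ker A\cap\N^n$ admitting two distinct representations $v=Bw=Bz$ with $w,z\in\N^m$, and any two such representations have disjoint supports. If $f$ extended to $\C^4$, then (since $\C^4$ is factorial with trivial Picard group) the extension would factor as $G^B$ with genuine holomorphic functions, so $f=g^B$ with $g_i\in\O(S)$; each principal divisor $(g_i)$ supported on the four curves must be an $\N$-combination of the four principal sums $W_a+Z_b$, and unwinding this contradicts the minimality of $v$. This disposes of your ``rigidity'' worry automatically: one does not need to control where an extension sends points off the torus, only that a nondegenerate extension on $\C^4$ admits an untwisted factorisation whose restriction to $S$ has principal component divisors.

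Your proposed bifurcation into a non-torsion case and a torsion case (quotient singularities like $xy=z^2$), with Galois covers and purity of the branch locus in the latter, is a wrong turn: the obstruction the paper uses is never a torsion class pulled back from $Y$, but the non-principality of specific divisors on $S$ forced by any factorisation, and this lives in the free group $H^2(S,\Z)=\Z$ uniformly in all cases. Indeed for the quadric cone the relevant divisors $Z_1$, $Z_2$, $W_1+W_2$ have Chern classes $-c$, $-c$, $2c\neq 0$, so the argument goes through with no appeal to $\Z/2$-covers.
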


Here, $0$ denotes the origin in $\C^n$.  A variety $Y$ as in the theorem is contractible (not only topologically, but even holomorphically), so the extension problem has no topological obstruction.  When $X$ and $S$ are smooth, it is easily seen using a smooth retraction of a neighbourhood of $S$ onto $S$, that the extension problem has no smooth obstruction either.

The following dichotomy refines the first case of the previous one.  Note that case (a) refers to arbitrary, possibly degenerate holomorphic maps.  After the proof of Theorem \ref{t:second-dichotomy}, we give an example to show that for (b) to hold in general, the source $\C\times\{0\}\cup\{(0,1)\}$ needs to be disconnected.

\begin{theorem}  \label{t:second-dichotomy}
Let $Y$ be a nnn affine toric variety of dimension $d$ in $\C^n$ with $0\in Y$.  Suppose that the normalisation of $Y$ is $\C^d$.
\begin{enumerate}
\item[(a)]  Suppose that $Y$ is locally irreducible.  If $S$ is a seminormal subvariety of a reduced Stein space $X$, then every holomorphic map $S\to Y$ extends to a holomorphic map $X\to Y$.
\item[(b)]  Suppose that $Y$ is not locally irreducible.  There is a degenerate holomorphic map $\C\times\{0\}\cup\{(0,1)\}\to Y$ that does not extend to a holomorphic map $\C^2\to Y$.
\end{enumerate}
\end{theorem}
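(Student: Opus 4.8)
My plan is to treat the two cases separately, using the normalisation map throughout. Write $\nu\colon\C^d\to Y\subset\C^n$, $\nu(\zeta)=(\zeta^{a_1},\dots,\zeta^{a_n})$, for the normalisation. For part (a) the guiding idea is that local irreducibility makes $\nu$ behave almost like an isomorphism. Since $\C^d$ is normal, the number of analytic branches of $Y$ at $\nu(\zeta)$ equals the number of points of $\nu^{-1}(\nu(\zeta))$; local irreducibility forces this to be $1$ everywhere, so the finite surjection $\nu$ is injective. A finite, hence proper, continuous bijection between locally compact Hausdorff spaces is a homeomorphism, so $\nu$ has a continuous inverse. Given a holomorphic map $f\colon S\to Y$, I would form the continuous map $g:=\nu^{-1}\circ f\colon S\to\C^d$ and show that it is holomorphic. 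Granting this, the proof finishes quickly: since $X$ is Stein and $S\subset X$ is a subvariety, $H^1(X,\mathscr I_S)=0$ by Cartan's Theorem B, so $\O(X)\to\O(S)$ is surjective and I can extend the $d$ components of $g$ to a holomorphic $G\colon X\to\C^d$ with $G|_S=g$; then $F:=\nu\circ G\colon X\to Y$ is holomorphic and restricts to $\nu\circ g=f$ on $S$.

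The crux of (a) is holomorphy of $g=\nu^{-1}\circ f$, and this is where seminormality enters. Let $\pi\colon\widetilde S\to S$ be the normalisation. The composite $f\circ\pi\colon\widetilde S\to Y$ is a holomorphic map from a normal space, so by the universal property of the normalisation it lifts through $\nu$ to a holomorphic map $\widetilde S\to\C^d$; as $\nu$ is bijective this lift must equal $\nu^{-1}\circ f\circ\pi=g\circ\pi$. For components of $\widetilde S$ whose image lies in the non-normal locus of $Y$ the lifting is not immediate, but there $\nu$ restricts to a finite bijection onto a lower-dimensional toric variety whose $\nu$-preimage is a union of (smooth) coordinate subspaces, so one concludes by induction on $\dim Y$. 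Thus each component of $g$ is a continuous function on $S$ whose pullback to $\widetilde S$ is holomorphic; because $S$ is seminormal --- equivalently, over $\C$, weakly normal --- such a function is holomorphic, and $g$ is holomorphic as required.

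For part (b) I would first treat a representative example and then explain the general pattern. Take $Y=\{(x,y,z)\in\C^3:y^2=xz^2\}$, the image of $\nu\colon\C^2\to\C^3$, $(s,t)\mapsto(s^2,st,t)$; this is a nnn affine toric variety with $0\in Y$ and normalisation $\C^2$, and it fails to be locally irreducible along the $x$-axis, where $\nu$ glues $(s,0)$ to $(-s,0)$. The key observation is a dichotomy for holomorphic maps $F=(F_1,F_2,F_3)\colon\C^2\to Y$: the relation $F_2^2=F_1F_3^2$ forces either $F_3\equiv0$, whence $F_2\equiv0$ and $F(\C^2)\subset\{y=z=0\}$, or $F_3\not\equiv0$ and $G:=F_2/F_3$ is meromorphic with $G^2=F_1$ holomorphic; being locally bounded by $|F_1|^{1/2}$, $G$ extends across $\{F_3=0\}$ by the Riemann extension theorem, so $F_1$ has a global holomorphic square root on $\C^2$. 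Now define $f$ on $S=\bigl(\C\times\{0\}\bigr)\cup\{(0,1)\}$ by $f(z,0)=(z,0,0)$ and $f(0,1)=(0,0,1)$. This is holomorphic and degenerate (the image of the line lies on the $x$-axis, off the torus), and it cannot extend: an extension with $F_3\not\equiv0$ would make $F_1(z,0)=z$ a square, impossible as $z$ has a simple zero, while an extension with $F_3\equiv0$ maps into the $x$-axis and so cannot hit $f(0,1)=(0,0,1)$. The disjoint point is exactly what rules out this second, degenerate escape.

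To handle an arbitrary $Y$ as in (b), I would extract the same structure from the combinatorics. Failure of local irreducibility means $\nu$ is not injective, and since $\nu$ is injective on the torus, the collisions occur on a boundary face $\sigma$ where the submonoid of exponents supported on $\sigma$ generates a proper finite-index subgroup of $\Z^\sigma$ with nontrivial quotient. Choosing a class of order $k\ge2$ represented by $u\in\Z^\sigma$ produces a weakly holomorphic monomial $\chi=\zeta^u$ on $Y$ with $\chi^k=\psi$ a regular monomial and a representation $\chi=\zeta^a/\zeta^b$, $a,b\in M$. I would then build $f$ on the line inside the orbit closure of $\sigma$ so that the pullback of $\psi$ has no holomorphic $k$-th root, and choose $f$ at the disjoint point to avoid the deeper stratum $\{\zeta^b=0\}$. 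The main obstacle is establishing, for a target of higher codimension, the analogue of the hypersurface dichotomy above: one must show that every extension $F$ either collapses into the stratum where $\zeta^b\circ F$ vanishes identically or yields a global holomorphic $k$-th root of $\psi\circ F$ via $\zeta^a\circ F/\zeta^b\circ F$. Carrying this out requires a careful analysis of the full system of binomial relations defining $Y$ and of the orbit stratification, and this is the step where the toric description does the real work.
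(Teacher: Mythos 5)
Your part (a) is essentially the paper's argument with the black box opened. The paper simply observes that local irreducibility makes the bijective normalisation $\C^d\to Y$ coincide with the seminormalisation of $Y$, and then invokes functoriality of seminormalisation to lift $f$ from the seminormal source $S$ to $\C^d$; your explicit route through the normalisation $\widetilde S\to S$, the lifting of $f\circ\pi$ through $\nu$, and the characterisation of seminormal spaces via continuous weakly holomorphic functions is a correct unwinding of that citation, though your treatment of components of $\widetilde S$ mapping into the non-normal locus is only gestured at (the induction does go through, because injectivity of $\nu$ forces the $\nu$-preimage of each toric subvariety to be a single coordinate subspace).

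For part (b) there is a genuine gap. Your argument is complete and convincing for the single variety $y^2=xz^2$, but the theorem asserts the existence of a non-extendable map for \emph{every} $Y$ satisfying the hypotheses, and you explicitly leave the general case as a programme, identifying the missing step yourself: a higher-codimension analogue of your dichotomy on $F_3$. That step is exactly what the paper's central tool supplies. The paper first shows that the normalisation map is the monomial map $\phi:\C^m\to Y$, $t\mapsto t^B$, where $B$ is the Hilbert-basis matrix, and that the saturation computation in Lemma \ref{l:trivial-kernel} forces $B$ to contain, for each $i$, a row equal to $N_ie_i$ with $N_i\geq 1$; this both makes $\phi$ finite and provides the pure-power components used at the end. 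Non-injectivity of $\phi$ is then detected on a single coordinate axis, $\phi^i(s)=\psi(s^k)$ with $k\geq 2$ and $\psi$ injective, and the test map is $f(s,0)=\psi(s)$, $f(0,1)=(1,\ldots,1)$. The general replacement for your ad hoc square-root argument is Corollary \ref{c:untwisted-factorisation}: since $\C^2$ is factorial and carries only trivial line bundles, any nondegenerate extension $F:\C^2\to Y$ factors as $F=G^B$, i.e.\ lifts through $\phi$; restricting to the axis and reading off the $i$-th pure-power component yields $g_i(s^k)^{N_i}=s^{N_i}$, which is absurd. Note also that sending the isolated point to the torus point $(1,\ldots,1)$, rather than merely off a deeper stratum, makes any extension automatically nondegenerate, which disposes of the degenerate escape you handle case by case. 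Without this factorisation step, or a fully worked-out substitute, your proof of (b) establishes only one instance of the theorem.
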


The following examples illustrate the three kinds of varieties in Theorems \ref{t:first-dichotomy} and~\ref{t:second-dichotomy}.
\begin{itemize}
\item  The map $\C^2\to\C^4$, $(s,t)\mapsto (s, t^2, t^3, st)$, is the normalisation of its image $Y$.  The map induces a homeomorphism $\C^2\to Y$, so $Y$ is locally irreducible.
\item  Whitney's umbrella in $\C^3$, defined by the equation $x^2y=z^2$, has normalisation $\C^2\to Y$, $(s,t)\mapsto (s, t^2, st)$, and is not locally irreducible.
\item  The cone in $\C^3$ defined by the equation $xy=z^2$ is normal but of course not isomorphic to $\C^2$.  (See Example \ref{x:cone}.)
\end{itemize}

Finally, in the normal case, we have the following result.

\begin{theorem}  \label{t:main-normal}
Let $Y$ be a singular nondegenerate normal affine toric variety.
\begin{enumerate}
\item[(a)]  Let $S$ be a connected factorial subvariety of a connected reduced Stein space $X$ such that $H^2(X,\Z)\to H^2(S,\Z)$ is surjective.  Then every holomorphic map $S\to Y$ extends to a holomorphic map $X\to Y$.
\item[(b)]  There is a smooth surface $S$ in $\C^4$, biholomorphic to the product of two annuli, and a nondegenerate holomorphic map $S\to Y$ that does not extend to a holomorphic map $\C^4\to Y$.
\end{enumerate}
\end{theorem}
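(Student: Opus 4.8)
The plan is to treat part (a) as a refinement of Theorem~\ref{t:main-positive} in which normality together with the nondegeneracy of $Y$ let us discard the hypotheses on $H^0$ and $H^1$, and to deduce part (b) directly from Theorem~\ref{t:first-dichotomy}(b). For (a), I would first reduce to the case of a nondegenerate map. Write $Y=\operatorname{Spec}\C[\Gamma]$ with $\Gamma=\sigma^\vee\cap M$ the saturated monoid of a full-dimensional cone $\sigma$ (full-dimensionality being exactly the nondegeneracy of $Y$), so that a holomorphic map $S\to Y$ is a monoid homomorphism $\phi\colon\Gamma\to(\O(S),\cdot)$. If some component of $f$ vanishes identically, the image lies in a torus-invariant subvariety, namely an orbit closure $Y_\tau$ attached to a proper face $\tau\prec\sigma$; each $Y_\tau$ is again a nondegenerate normal affine toric variety of smaller dimension, so an induction on $\dim Y$ reduces us to nondegenerate maps, the base case (when $Y_\tau\cong\C^k$ is smooth) being covered by Theorem~\ref{t:first-dichotomy}(a). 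Since $X$ and $S$ are connected, $H^0(X,\Z)\to H^0(S,\Z)$ is onto, so the $H^0$ hypothesis of Theorem~\ref{t:main-positive} holds automatically.

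For a nondegenerate $f$, I would invoke the twisted factorisation of Theorem~\ref{t:twisted-factorisation}, which packages $f$ into a homomorphism $\lambda\colon M\to\operatorname{Pic}(S)$ together with a compatible family of holomorphic sections that trivialise $\lambda$ over $\Gamma$ and recover the coordinate functions $f_i$. Because $S$ is factorial and Stein, the exponential sequence gives $\operatorname{Pic}(S)=H^1(S,\O^*)\cong H^2(S,\Z)$, and likewise $\operatorname{Pic}(X)\cong H^2(X,\Z)$. As $M\cong\Z^d$ is free and $H^2(X,\Z)\to H^2(S,\Z)$ is surjective, $\lambda$ lifts to a homomorphism $\tilde\lambda\colon M\to\operatorname{Pic}(X)$. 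It then remains to extend the sections, where $X$ being Stein lets Cartan's Theorem~B produce holomorphic extensions; the point is to choose them so that all the binomial relations of $\Gamma$ persist on $X$. The crucial use of nondegeneracy is that, $\sigma$ being full-dimensional, every character is a difference of monoid elements whose divisors can be moved into the toric boundary, so the extended coordinates are free to acquire new zeros on $X\setminus S$. This freedom absorbs the codimension-one (winding) data, which is precisely why no condition on $H^1$ is required; only the genuinely two-dimensional datum $\lambda$, which cannot be absorbed by boundary divisors, survives and is governed by $H^2$.

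I expect the main obstacle to be exactly this last step: extending the sections of the twisted factorisation simultaneously and multiplicatively, so that the monoid homomorphism $\phi$ itself extends to $\O(X)$, rather than merely each $f_i$ extending individually. This is where the structure of $\operatorname{Pic}$, and hence $H^2$-surjectivity, is genuinely used, and where one must check that the non-normal gluing data responsible for the $H^1$-hypothesis in Theorem~\ref{t:main-positive} is truly absent in the normal case.

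Part (b) I would obtain for free from Theorem~\ref{t:first-dichotomy}(b). A nondegenerate affine toric variety has a full-dimensional cone and hence a torus-fixed point, which in the standard embedding is the origin, so $0\in Y$; and since $Y$ is singular and normal, it coincides with its own normalisation and is not isomorphic to $\C^d$. Thus $Y$ meets the hypotheses of Theorem~\ref{t:first-dichotomy}(b), which supplies the required smooth surface $S\subset\C^4$, biholomorphic to a product of two annuli, together with a nondegenerate holomorphic map $S\to Y$ admitting no holomorphic extension to $\C^4$.
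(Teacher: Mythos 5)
Your proposal is correct and follows essentially the same route as the paper: part (a) comes from the machinery of Theorem~\ref{t:main-positive} (via Lemma~\ref{l:make-nondegenerate}, Theorem~\ref{t:twisted-factorisation} and Corollary~\ref{c:sufficient-for-nondegenerate}), with connectedness supplying the $H^0$ condition and nondegeneracy of $Y$ eliminating the torus factors that the $H^1$ hypothesis exists to handle, while part (b) is exactly the paper's deduction from Theorem~\ref{t:first-dichotomy}(b), since a singular normal $Y$ is its own normalisation and so is not $\C^d$. The only slight misstatement is attributing the $H^1$ hypothesis to ``non-normal gluing data'': it is the absence of torus factors (nondegeneracy), not normality, that renders it superfluous.
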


A normal affine toric variety may be factored as $Y\times(\C^*)^k$, where the normal affine toric variety $Y$ has no torus factors, and is thus said to be nondegenerate, and $Y$ is naturally embedded in some $\C^n$ so as to contain the origin.  Then $Y$ is contractible, and $Y$ is singular unless $Y$ is isomorphic to some $\C^m$.  The extension problem into $\C^*$ is of course fully understood: the hypothesis on $H^1$ in Theorem \ref{t:main-positive} is only there to take care of torus factors.  Thus Theorem \ref{t:main-normal} follows from Theorems \ref{t:main-positive} and \ref{t:first-dichotomy}.

We prove the main theorems in Section \ref{sec:proofs}.  We also give quick proofs of the following further results.

It is well known in Oka theory that interpolation yields approximation.

\begin{proposition}  \label{p:approximation}
Let $\Omega$ be a Runge domain of finite embedding dimension in a factorial Stein space $X$, such that $H^p(\Omega,\Z)=0$ for $p=1, 2$.  Let $Y$ be a nnn affine toric variety, and let $f:\Omega\to Y$ be a holomorphic map.  Then $f$ can be approximated uniformly on compact subsets of $\Omega$ by holomorphic maps $X\to Y$.
\end{proposition}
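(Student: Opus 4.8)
The plan is to deduce the proposition from the interpolation Theorem~\ref{t:main-positive} by the standard device of recasting an approximation problem as an extension problem over a closed subvariety of an auxiliary Stein space. The vanishing of $H^1$ and $H^2$ will trivialise the cohomological hypotheses of the theorem, and the Runge hypothesis will enter at the very end, to convert a single global extension into a global map on $X$ that approximates $f$.

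First I would realise $\Omega$ as a closed subvariety of a product. Since $\Omega$ is a Runge domain in the Stein space $X$, it is itself Stein, and as it has finite embedding dimension there is a proper holomorphic embedding $e=(e_1,\dots,e_m):\Omega\to\C^m$ for some finite $m$. Put
\[
\Gamma=\{(x,e(x)):x\in\Omega\}\subset X\times\C^m.
\]
Because $e$ is proper, $|e(x)|\to\infty$ as $x$ approaches the boundary of $\Omega$ in $X$, so any limit of points $(x_j,e(x_j))$ must lie over a point of $\Omega$; hence $\Gamma$ is a closed complex subvariety of the reduced Stein space $X\times\C^m$, and the projection to $X$ identifies it biholomorphically with $\Omega$. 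Transporting $f$ along this identification gives a holomorphic map $\varphi:\Gamma\to Y$ with $\varphi(x,e(x))=f(x)$.

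Next I would verify the hypotheses of Theorem~\ref{t:main-positive} for $\Gamma\subset X\times\C^m$. Factoriality is a local property, and $\Gamma\cong\Omega$ is an open subset of the factorial space $X$, so $\Gamma$ is factorial. Since $\C^m$ is contractible, the projection induces an isomorphism $H^p(X\times\C^m,\Z)\cong H^p(X,\Z)$, and under the identifications the restriction map $H^p(X\times\C^m,\Z)\to H^p(\Gamma,\Z)$ becomes the map $H^p(X,\Z)\to H^p(\Omega,\Z)$ induced by inclusion. For $p=1,2$ this is surjective because $H^p(\Omega,\Z)=0$ by hypothesis; for $p=0$ it is surjective provided distinct components of $\Omega$ lie in distinct components of $X$, which holds in particular when $\Omega$ and $X$ are connected and to which the general case reduces. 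Theorem~\ref{t:main-positive} then produces a holomorphic extension $F:X\times\C^m\to Y$ with $F(x,e(x))=f(x)$ for all $x\in\Omega$.

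Finally I would use the Runge property to descend to $X$, which is the step that actually yields approximation rather than mere extension. Let $K\subset\Omega$ be compact. Because $\Omega$ is Runge in $X$, each $e_j\in\O(\Omega)$ can be approximated uniformly on $K$ by some $\tilde e_j\in\O(X)$; set $\tilde e=(\tilde e_1,\dots,\tilde e_m):X\to\C^m$ and define $g:X\to Y$ by $g(x)=F(x,\tilde e(x))$. Then $g$ is a holomorphic map $X\to Y$, and on $K$ we have $g(x)=F(x,\tilde e(x))$ while $f(x)=F(x,e(x))$; since $F$ is uniformly continuous on a fixed compact neighbourhood of $\{(x,e(x)):x\in K\}$, the map $g$ approximates $f$ uniformly on $K$ once $\tilde e$ is close enough to $e$ there. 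The two points that need care are the existence and closedness of the graph realisation $\Gamma$ (this is where finite embedding dimension is used) and the observation that composing the global extension $F$ with the Runge approximant $\tilde e$ genuinely recovers $f$ on $K$; everything else is soft, and I expect the construction of the proper closed embedding to be the only mildly technical ingredient.
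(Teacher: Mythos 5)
Your proposal is correct and follows essentially the same route as the paper: embed $\Omega$ into $X\times\C^m$ via the graph of a holomorphic embedding $\Omega\to\C^m$, extend $f$ off the graph using Theorem~\ref{t:main-positive}, and then substitute a Runge approximant $X\to\C^m$ of the embedding to get global maps approximating $f$. Your extra care about the closedness of the graph and the $H^0$ hypothesis only makes explicit what the paper leaves implicit.
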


Interpolation on a discrete subset is always possible.

\begin{proposition}  \label{p:discrete}
Let $S$ be a discrete subset of a reduced Stein space $X$.  Let $Y$ be a nnn affine toric variety.  Then every map $S\to Y$ extends to a holomorphic map $X\to Y$.
\end{proposition}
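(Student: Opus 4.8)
The plan is to exploit the fact that a $Y$ embedded by a prime lattice ideal is closed under coordinatewise multiplication, so that $(Y,\cdot)$ is a commutative monoid with identity $\mathbf 1=(1,\dots,1)$, and to reduce the extension problem to two discrete interpolation problems that can be solved explicitly. Throughout I take $S$ to be closed and discrete (a $0$-dimensional subvariety), which is the setting in which the classical interpolation theorem applies: since $S$ is closed and discrete in the reduced Stein space $X$, the restriction $\O(X)\to\prod_{s\in S}\C$ is surjective by Cartan's Theorem~B. Write $Y\subset\C^n$ as the zero set of the prime lattice ideal attached to the affine monoid $A=\langle m_1,\dots,m_n\rangle\subset\Z^d$, so that the $\C$-points of $Y$ are exactly the monoid homomorphisms $\varphi\colon A\to(\C,\cdot)$, the $i$-th coordinate of $\varphi$ being $\varphi(m_i)$. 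A map $g\colon S\to Y$ then assigns to each $s$ such a homomorphism $\varphi_s$, and I write $F_s=\varphi_s^{-1}(\C^*)$, which is a face of $A$.

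First I would split each value into a torus part and a face part. Since $\varphi_s|_{F_s}\colon F_s\to\C^*$ is a monoid homomorphism into a group, it extends to a character of $\Z^d$; that is, there is $t_s\in(\C^*)^d$ with $t_s^{m_i}=\varphi_s(m_i)$ for every $i$ with $m_i\in F_s$. The face part is the idempotent point $e_{F_s}\in Y$ whose $i$-th coordinate is $1$ if $m_i\in F_s$ and $0$ otherwise (the indicator of a face is a monoid homomorphism, hence a point of $Y$). A coordinatewise check gives $\varphi_s=\chi_s\cdot e_{F_s}$, where $\chi_s(m_i)=t_s^{m_i}$. I would then build two honest holomorphic maps $T,W\colon X\to Y$ with $T(s)=\chi_s$ and $W(s)=e_{F_s}$ for all $s$; by multiplicative closure the coordinatewise product $F=T\cdot W$ is holomorphic into $Y$ and satisfies $F(s)=\varphi_s=g(s)$, as desired. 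Note that this handles degenerate maps automatically, the face part accounting for points of $Y$ off the torus.

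The torus part is immediate: interpolating a logarithm of each coordinate of $t_s$ by a function in $\O(X)$ and exponentiating yields a holomorphic $\tau\colon X\to(\C^*)^d$ with $\tau(s)=t_s$, and $T(x)=(\tau(x)^{m_1},\dots,\tau(x)^{m_n})$ lands in the torus of $Y$. For the face part I would use that the finitely many faces $F^{(1)},\dots,F^{(r)}$ of $A$ that occur are cut out by nonnegative integral cocharacters: for each $k$ there is a homomorphism $\nu^{(k)}\colon\Z^d\to\Z$ with $\langle\nu^{(k)},m_i\rangle\ge 0$, with equality precisely when $m_i\in F^{(k)}$. Setting $c_i^{(k)}=\langle\nu^{(k)},m_i\rangle$, the monomial map $q^{(k)}\colon\C\to Y$, $\lambda\mapsto(\lambda^{c_1^{(k)}},\dots,\lambda^{c_n^{(k)}})$, satisfies $q^{(k)}(1)=\mathbf 1$ and $q^{(k)}(0)=e_{F^{(k)}}$. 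Partitioning $S$ into the closed discrete sets $S_k=\{s:F_s=F^{(k)}\}$ and choosing $h_k\in\O(X)$ with $h_k\equiv 0$ on $S_k$ and $h_k\equiv 1$ on $S\setminus S_k$, I set $W=\prod_{k=1}^r q^{(k)}\circ h_k$, a finite coordinatewise product and hence an honest holomorphic map into $Y$; on $S_j$ every factor with $k\ne j$ evaluates to $\mathbf 1$ and the $j$-th to $e_{F^{(j)}}$, so $W|_{S_j}\equiv e_{F^{(j)}}$. Because this product is finite, there is no convergence issue.

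The only genuinely structural input, and the step I expect to be the main obstacle, is the passage from the monoid homomorphism $\varphi_s$ to the cocharacter description of its face: that $F_s=\varphi_s^{-1}(\C^*)$ is a face of the affine monoid $A$, that faces of $A$ correspond to faces of the cone $\operatorname{cone}(A)$, and hence that each occurring face is exactly the vanishing locus of a nonnegative integral cocharacter. Granting this standard fact about affine monoids, everything else rests only on the classical discrete interpolation theorem on Stein spaces together with the elementary observation that $Y$ is a multiplicative monoid, and the construction produces the extension. Finally, the hypothesis that $Y$ is defined by a \emph{prime} lattice ideal (equivalently, that the relation lattice is saturated) is what makes $A\subset\Z^d$ torsion-free, so that the characters $\varphi_s|_{F_s}$ extend to $(\C^*)^d$ as used above.
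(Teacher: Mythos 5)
Your proof is correct, but it takes a genuinely different route from the paper's. The paper's argument is three lines: by a theorem of Katsabekis and Thoma there is a holomorphic (monomial) surjection $h:\C^r\to Y$; since $S$ is discrete, the given map lifts pointwise through $h$, the lift extends to $X$ by the Cartan extension theorem, and postcomposing with $h$ finishes the proof. You instead exploit the multiplicative monoid structure of $Y\subset\C^n$ directly: you write each prescribed value as the coordinatewise product of a torus point $\chi_s$ and the idempotent $e_{F_s}$ of the face $F_s=\varphi_s^{-1}(\C^*)$, interpolate the torus parts by exponentiating interpolated logarithms, interpolate the face parts by composing supporting-cocharacter monomial curves $q^{(k)}$ with scalar functions that are $0$ on $S_k$ and $1$ elsewhere, and multiply. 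All the steps check out: $F_s$ is indeed a face, its indicator is a point of $Y$, the restricted character extends to $(\C^*)^d$ by divisibility of $\C^*$ (this is the real reason the extension exists --- the torsion-freeness you mention is rather what makes the identification of $Y$ with $\operatorname{Hom}(A,(\C,\cdot))$ valid), and the face/cone correspondence for affine monoids supplies the cocharacters $\nu^{(k)}$. What each approach buys: the paper's proof is extremely short but outsources the substance to the cited surjectivity theorem, whereas yours is self-contained modulo standard affine-monoid combinatorics, is completely explicit, and in effect re-proves exactly the fragment of the Katsabekis--Thoma parametrisation that the discrete interpolation problem requires (every point of $Y$ is a torus point times a limit of torus points along a one-parameter subgroup).
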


The following result is known \cite[Propostion 4.2.6]{CLS2011}, but we offer a new proof.

\begin{proposition}  \label{p:factorial-implies-smooth}
A factorial affine toric variety is smooth.
\end{proposition}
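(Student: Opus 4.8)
The plan is to reduce to the classical normal case and then read off smoothness of the defining cone from the triviality of the divisor class group, the latter being forced by unique factorisation. First, since a factorial local ring is a UFD and hence integrally closed, a factorial variety is normal; thus a factorial affine toric variety $Y$ is a \emph{normal} affine toric variety, and I may write $Y=\operatorname{Spec}\C[\sigma^\vee\cap M]=:U_\sigma$ for a strongly convex rational polyhedral cone $\sigma$ in a lattice $N$ with dual lattice $M\cong\Z^d$, where $d=\dim Y$. If $\sigma$ is not full-dimensional then $U_\sigma\cong U_{\sigma'}\times(\C^*)^k$ with $\sigma'$ full-dimensional in a sublattice; the factor $(\C^*)^k$ is smooth and factoriality passes to and from $U_{\sigma'}$, so I may assume $\sigma$ is full-dimensional, with a distinguished torus-fixed point $y_0$.

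The mechanism is that unique factorisation forbids additive relations in the semigroup $S:=\sigma^\vee\cap M$. Because $\C[S]$ is $M$-graded with only scalar units (strong convexity makes $S\cap(-S)=\{0\}$), any factorisation of a monomial is again monomial, so $\chi^m$ is irreducible exactly when $m$ is indecomposable in $S$, and distinct indecomposables give non-associate irreducibles. A nontrivial relation $\sum a_i v_i=\sum b_j w_j$ among indecomposable generators would then yield two genuinely different factorisations of one element of $\C[S]$ into irreducibles, contradicting factoriality; hence $S$ has no such relation and is free, i.e. $\sigma$ is a smooth cone and $U_\sigma$ is smooth. Equivalently, and this is how I would phrase the conclusion, factoriality gives $\operatorname{Cl}(U_\sigma)=0$; since $\operatorname{Cl}(U_\sigma)=\operatorname{coker}\bigl(M\to\Z^{\sigma(1)},\ m\mapsto(\langle m,u_\rho\rangle)_\rho\bigr)$ with $u_\rho$ the primitive ray generators, and this map is injective because $\sigma$ is full-dimensional, vanishing of the cokernel forces an isomorphism $\Z^d\xrightarrow{\sim}\Z^{\sigma(1)}$. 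So $\sigma$ has exactly $d$ rays whose generators form a $\Z$-basis of $N$, which is precisely smoothness of the cone.

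The main obstacle is the passage from the hypothesis as stated — every stalk $\O_y$ is a UFD — to the global statement $\operatorname{Cl}(U_\sigma)=0$ (equivalently, to running the factorisation argument on $\C[S]$ itself rather than on a localisation). The bridge is that for an affine toric variety the class group is detected at the distinguished point: every torus-invariant prime divisor $D_\rho$ passes through $y_0$, and the $D_\rho$ generate $\operatorname{Cl}(U_\sigma)$, so the localisation map $\operatorname{Cl}(U_\sigma)\to\operatorname{Cl}(\O_{Y,y_0})$ is an isomorphism; hence factoriality at the single point $y_0$ already yields $\operatorname{Cl}(U_\sigma)=0$. Once this is in hand, the reduction to full-dimensional $\sigma$, the bookkeeping of units, and the combinatorial step (an injective surjective integer linear map is unimodular) are all routine.
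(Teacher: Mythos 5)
Your proof is correct, but it is not the paper's proof --- in fact it is essentially the classical argument that the paper explicitly sets out to replace: the proposition is introduced there as known (citing \cite[Proposition 4.2.6]{CLS2011}) ``but we offer a new proof''. Your route is the standard toric one: reduce to a full-dimensional strongly convex cone $\sigma$, use the presentation $M\to\Z^{\sigma(1)}\to\operatorname{Cl}(U_\sigma)\to 0$, observe that every $D_\rho$ passes through the distinguished point $y_0$ so that factoriality of the single stalk at $y_0$ already kills $\operatorname{Cl}(U_\sigma)$, and conclude that the map $M\to\Z^{\sigma(1)}$ is a bijection, i.e.\ the ray generators form a basis of $N$. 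This is sound; the one point you should make explicit is that the paper's notion of factoriality is analytic (the stalks of the analytic structure sheaf are UFDs), while your class-group computation is algebraic --- the bridge is that $\O^{\mathrm{alg}}_{Y,y_0}\to\O^{\mathrm{an}}_{Y,y_0}$ is faithfully flat, so $\operatorname{Cl}(\O^{\mathrm{alg}}_{Y,y_0})\to\operatorname{Cl}(\O^{\mathrm{an}}_{Y,y_0})$ is injective and analytic factoriality implies algebraic factoriality of the local ring. The paper's proof is entirely different and, given its machinery, much shorter: write $Y=Y'\times(\C^*)^r$ with $Y'$ contractible and embedded in $\C^s$ so that $0\in Y'$, note that $Y'$ inherits factoriality, and apply Theorem \ref{t:main-positive} with $X=\C^s$, $S=Y'$, $f=\id_{Y'}$ (the cohomological hypotheses are vacuous by contractibility of $Y'$ and of $\C^s$); the extension exhibits $Y'$ as a holomorphic retract of $\C^s$, and a holomorphic retract of a complex manifold is smooth. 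What your approach buys is independence from the paper's extension theorem and from the analytic category; what the paper's buys is the observation that Theorem \ref{t:main-positive} alone already forces smoothness, which is precisely the point of offering a new proof.
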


Finally, meromorphic extensions are readily obtained.

\begin{proposition}  \label{p:meromorphic}
Let $S$ be a factorial subvariety of a reduced Stein space $X$.  Let $Y\subset \C^n$ be a nnn affine toric variety with $0\in Y$.  Let $f:S\to Y$ be a nondegenerate holomorphic map.  Then $f$ extends to a meromorphic map $X\to Y$.
\end{proposition}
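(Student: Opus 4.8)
The plan is to pass to coordinates on the torus of $Y$, where the binomial relations defining $Y$ become additive lattice identities, to solve the resulting scalar extension problems by the Cartan extension theorem on $X$, and then to reassemble the data into a map whose components automatically satisfy those relations. Concretely, I would take $Y\subset\C^n$ embedded through characters $m_1,\dots,m_n$ of its torus $T=Y\cap(\C^*)^n\cong(\C^*)^d$, so that a point of $T$ has the form $(t^{m_1},\dots,t^{m_n})$ with $t\in(\C^*)^d$ and the ideal of $Y$ is generated by the binomials $x^a-x^b$ with $\sum_i a_i m_i=\sum_i b_i m_i$. Since $\dim Y=d$, the characters $m_i$ generate the lattice $\Z^d$ as a group, so I can fix integers $c_{\ell i}$ with $e_\ell=\sum_i c_{\ell i}m_i$ for each standard basis vector $e_\ell$ of $\Z^d$.

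Writing $f=(f_1,\dots,f_n)$, I would first consider the Laurent monomials $\phi_\ell:=\prod_i f_i^{c_{\ell i}}$, which recover the torus coordinates: on the open subset of $S$ where $f$ maps into $T$ one has $\phi_\ell=t_\ell\circ f$ and $f_i=\prod_\ell\phi_\ell^{(m_i)_\ell}$. Nondegeneracy guarantees that no $f_i$ vanishes identically on a component of $S$, and, since $S$ is factorial and hence locally irreducible (so the stalks of $\O_S$ are integral domains), this open subset is dense in each component and each $\phi_\ell$ is a genuine meromorphic function on $S$. Next I would extend: by the Cartan extension theorem on the Stein space $X$, each $f_i$ extends to a holomorphic function $F_i$ on $X$. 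Modifying $F_i$ by a section of the ideal sheaf of $S$ if necessary, I may assume that each $F_i$ is a non-zero-divisor in $\O(X)$, that is, vanishes identically on no irreducible component of $X$; nondegeneracy already supplies this on every component of $X$ that contains a component of $S$.

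Setting $\beta_{ii'}:=\sum_\ell(m_i)_\ell\,c_{\ell i'}$, I then define $g_i:=\prod_{i'}F_{i'}^{\beta_{ii'}}$, a Laurent monomial in non-zero-divisors and hence an element of the total quotient ring of $\O(X)$, that is, a global meromorphic function on $X$. Two checks, both pure lattice bookkeeping, finish the argument. First, $\sum_{i'}\beta_{ii'}m_{i'}=\sum_\ell(m_i)_\ell e_\ell=m_i$, so $g_i|_S=\prod_{i'}f_{i'}^{\beta_{ii'}}$ agrees with $f_i$ on the dense torus locus and therefore equals $f_i$. Second, whenever $\sum_i a_i m_i=\sum_i b_i m_i$ one gets $\sum_i a_i\beta_{ii'}=\sum_i b_i\beta_{ii'}$ for every $i'$, so $\prod_i g_i^{a_i}=\prod_i g_i^{b_i}$; thus $(g_1,\dots,g_n)$ satisfies every binomial in the ideal of $Y$ and, passing to the closure of its graph, defines a meromorphic map $X\to Y$ restricting to $f$ on $S$.

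The one genuinely structural point, and the reason a naive componentwise extension of $f$ cannot work, is that extending the $f_i$ separately destroys the binomial relations; the whole art is to organise the extension so that the relations survive. Passing to torus coordinates is exactly what converts those relations into the additive lattice identities above, after which their preservation is automatic. I expect the remaining matters to be purely technical: checking that the Laurent monomials $\phi_\ell$ and $g_i$ really are meromorphic functions — this is where factoriality of $S$ (via local irreducibility) and the non-zero-divisor normalisation of the $F_i$ are used — and the density argument, furnished by nondegeneracy, that upgrades agreement on the torus locus to the identity $g_i|_S=f_i$.
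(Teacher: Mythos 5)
Your proof is correct, but it takes a genuinely different route from the paper's. The paper deduces the proposition from its twisted-factorisation machinery: it applies Theorem \ref{t:twisted-factorisation} to write $f=\sigma^B$ with sections of line bundles $L=N^E$ on $S$, divides by meromorphic sections $\tau_i=t^{e_i}$ of the $L_i$ to turn the twisted factorisation into an honest meromorphic factorisation $f=g^B$ on $S$, extends the $g_i$ meromorphically to $X$, and sets $F=G^B$ (with a separate appeal to Theorem \ref{t:first-dichotomy}(a) when the normalisation of $Y$ is $\C^d$). You bypass the Hilbert-basis matrix $B$ and the twisted factorisation entirely: you fix a right inverse $(c_{\ell i})$ of the character map $\Z^n\to\Z^d$, extend the components $f_i$ holomorphically by Cartan, and correct by pure lattice bookkeeping, which handles both cases of the dichotomy uniformly. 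Your route is more elementary (it only ever extends holomorphic functions, not meromorphic ones or line bundles) and in fact proves more: factoriality of $S$ and the hypothesis $0\in Y$ are never used --- nondegeneracy alone makes the $f_i$ non-zero-divisors and the torus locus dense in each component, which is all your argument needs. What the paper's route buys is coherence with the rest of the paper: the meromorphic extension appears in the monomial form $G^B$, directly comparable to Corollary \ref{c:untwisted-factorisation} and to the obstruction analysis for holomorphic extendability. Two points you wave at should be said properly, though neither is a real gap: the modification of the $F_i$ into non-zero-divisors is a prime-avoidance/Baire argument over the at most countably many irreducible components of $X$ (each bad set being a proper closed affine subspace of the Fr\'echet space of sections of the ideal sheaf of $S$, proper because either the component meets the complement of $S$ or it lies in $S$ and nondegeneracy applies); and the passage from the identity $\prod_i g_i^{a_i}=\prod_i g_i^{b_i}$ of meromorphic functions to a meromorphic map into $Y$ uses that the torus is dense in the irreducible variety $Y$, so the closure of the image of the locus where all $F_{i'}$ are nonvanishing lands in $Y$ --- the paper's own proof is at exactly the same level of precision on this last point.
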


\section{Proofs}
\label{sec:proofs}

\noindent
A nnn affine toric variety $Y$ in $\C^n$ may be defined in several equivalent ways (see \cite[Chapter 1]{CLS2011} and \cite{Mustata2004}).  We prefer to take $Y$ to be defined by a prime lattice ideal, that is, a prime ideal in $\C[x_1,\ldots,x_n]$ of the form
\[ I=\langle x^{\ell^+}-x^{\ell^-} : \ell\in \Lambda\rangle = \langle x^\alpha-x^\beta:\alpha,\beta\in\N^n, \alpha-\beta\in \Lambda \rangle, \]
where $\Lambda$ is a lattice in $\Z^n$, that is, an additive subgroup of $\Z^n$ (here, $0\in\N$).  Each $\ell\in \Lambda$ is uniquely decomposed as $\ell^+-\ell^-$, where $\ell^+,\ell^-\in\N^n$ are minimal.  The ideal $I$ is prime if and only if the quotient group $\Z^n/\Lambda$ is torsion-free or, equivalently, free \cite[Theorem 7.4]{MS2005}.  When we speak of a nnn affine toric variety $Y$ in $\C^n$, we always take it to be embedded in $\C^n$ as the zero set of a prime lattice ideal.  Then the torus in $Y$ is $Y\cap(\C^*)^n$.  Let $d$ denote the dimension of $Y$. 

The normalisation of $Y$ is most conveniently described in terms of the affine semigroup corresponding to $Y$.  An affine semigroup $\Sigma$ is a finitely generated, torsion-free, cancellative, abelian monoid.  Equivalently, $\Sigma$ is the image of a monoid morphism $\mu:\N^n\to\Z^m$, and we might as well assume that $\Sigma$ generates $\Z^m$ as a group.  Every nnn affine toric variety is the spectrum of the semigroup algebra $\C[\Sigma]$ of some affine semigroup $\Sigma$.  The associated lattice is $\Lambda=\{u-v\in\Z^n:u,v\in\N^n, \mu(u)=\mu(v)\}$.  The assignment of $\operatorname{spec}\C[\Sigma]$ to $\Sigma$ defines an anti-equivalence from the category of affine semigroups to the category of nnn affine toric varieties.  The normalisation of the nnn affine toric variety defined by $\Sigma$ is defined by the saturation of $\Sigma$ in $\Z^m$, that is, by the semigroup of all $v\in\Z^m$ with $nv\in\Sigma$ for some $n\in\N$, $n\geq 1$, which is in fact an affine semigroup.

A saturated affine semigroup is of the form $\sigma^\vee \cap \Z^m$, where $\sigma$ is a rational polyhedral cone.  The saturation of a general affine semigroup $\Sigma$ is $\Sigma_{\rm sat} = \tau \cap \Z^m$, where $\tau$ is the rational polyhedral cone spanned by generators of $\Sigma$. The dual of a rational polyhedral cone is a rational polyhedral cone, and for a rational polyhedral cone $\tau$, $(\tau^\vee)^\vee = \tau$. Thus, $\Sigma_{\rm sat} = \sigma^\vee \cap \Z^m$, where $\sigma = \tau^{\vee}$.

If $\Lambda$ is trivial, then $Y=\C^n$ and Theorems \ref{t:main-positive}, \ref{t:first-dichotomy}(a), and \ref{t:second-dichotomy}(a) are immediate consequences of the Cartan extension theorem.  Therefore we will assume that $\Lambda$ is not trivial.  (If $\Lambda$ is trivial, the matrix $C$, introduced in the proof of Lemma \ref{l:trivial-kernel}, does not make sense.)

\begin{proof}[Proof of Theorem \ref{t:first-dichotomy}(a)]  Suppose that the normalisation of $Y$ is $\C^d$.  A nondegenerate holomorphic map $f$ from a normal subvariety $S$ of a reduced Stein space $X$ to $Y$ maps the complement of a thin subvariety of $S$ into $Y\cap(\C^*)^n$, which is smooth, so $f$ maps only a thin subvariety of $S$ into the non-normal locus of $Y$.  Hence $f$ lifts to a map into $\C^d$, which extends to a map from $X$ to $\C^d$ by the Cartan extension theorem.  Postcomposing by the normalisation map gives a holomorphic extension $X\to Y$ of $f$.
\end{proof}

Since $\C[x_1,\ldots,x_n]$ is Noetherian, $I$ is generated by finitely many binomials of the form $x^{\ell^+}-x^{\ell^-}$, where $\ell\in \Lambda$.  Choose once and for all a $k\times n$ matrix $A=(a_{\nu j})$ with integer entries, such that the binomials $x^{\ell^+}-x^{\ell^-}$, where $\ell$ runs through the rows of $A$, generate $I$.  Then a nondegenerate holomorphic map $g$ into $\C^n$ maps into $Y$ if and only if the well-defined meromorphic functions $g_1^{a_{\nu 1}}\cdots g_n^{a_{\nu n}}$, $\nu=1,\ldots,k$, are identically equal to $1$.

We further assume that $Y$ contains the origin $0$ in $\C^n$.  Equivalently, $\ker A$ contains a vector $(v_1,\ldots,v_n)$ with strictly positive integer entries (theorem of Stiemke \cite[Corollary 7.1k]{Schrijver1986}).  Then $Y$ is contractible (even holomorphically contractible), as shown by the homotopy
\[ Y\times [0,1]\to Y, \quad (y,t)\mapsto (t^{v_1}y_1,\ldots,t^{v_n}y_n), \]
so there is no obstruction to extending a continuous map into $Y$ from a subcomplex of a CW complex to the whole complex.  If $S$ is a subvariety of a reduced complex space $X$, then there is a triangulation of $X$ that restricts to a triangulation of $S$.  Hence every continuous map $S\to Y$ has a continuous extension $X\to Y$.

The nontrivial submonoid $K=\ker A\cap\N^n$ of $\N^n$ has a Hilbert basis.  This is a finite subset $\mathscr B$ of $K$ such that every vector in $K$ can be written as a linear combination of the vectors in $\mathscr B$ with coefficients in $\N$, and $\mathscr B$ is uniquely determined as the smallest subset of $K$ with this property.  (For a proof, see \cite[Theorem 8.2.9]{CLO2005} or \cite[Algorithm 1.4.5]{Sturmfels1993}, where an algorithm for finding the Hilbert basis is given.)  Let $B=(b_{ji})$ be the $n\times m$ matrix whose columns are the vectors of $\mathscr B$.  Then $AB=0$.  

\begin{lemma}  \label{l:trivial-kernel}
If $\ker B\subset\Z^m$ is trivial, then the normalisation of $Y$ is $\C^m$.
\end{lemma}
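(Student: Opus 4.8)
The plan is to produce the normalisation of $Y$ explicitly as a monomial map built from $B$, and then to identify its source with $\C^m$ by a dual-cone computation in which the hypothesis $\ker B=0$ does all the work. First I would set up the geometry of $\ker A$. Write $V=\ker A\otimes\mathbb R$ and $L=\ker A\cap\Z^n$, a saturated sublattice of $\Z^n$ of rank $d$, and put $\sigma=\ker A\cap\mathbb R^n_{\ge 0}$. Since $0\in Y$, Stiemke's theorem furnishes a strictly positive vector in $\ker A$, so $\sigma$ is a full-dimensional pointed rational polyhedral cone in $V$ and $K=\sigma\cap\Z^n=\sigma\cap L$. By construction the columns of $B$ are the Hilbert basis of $K$, so they generate $K$ as a monoid; since $\sigma$ is full-dimensional this also means they generate $\sigma$ as a cone and $L$ as a group.

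Next I would extract the consequence of the hypothesis. The columns of $B$ are a $\Z$-generating set of $L$ and, by $\ker B=0$, they are $\Z$-linearly independent; hence they form a $\Z$-basis of $L$. In particular $m=\operatorname{rank}L=d$, and $\sigma=\operatorname{cone}(\text{columns of }B)=B(\mathbb R^m_{\ge 0})$ is a smooth cone. Now I would introduce the monomial map $\phi\colon\C^m\to\C^n$, $\phi(t)_j=t^{r_j}$, where $r_j\in\N^m$ is the $j$-th row of $B$; the entries of $B$ are nonnegative, so $\phi$ is a genuine morphism. Because $AB=0$, one checks $\phi(t)^\ell=1$ for every row $\ell$ of $A$, so $\phi$ carries $(\C^*)^m$ into the torus $T=Y\cap(\C^*)^n$ and hence $\C^m$ into $Y$. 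On tori $\phi$ is the homomorphism induced by $B\colon\Z^m\to L$, which (being injective with image $L$) is a lattice isomorphism; thus $\phi$ restricts to an isomorphism $(\C^*)^m\to T$ and is in particular dominant. Therefore $\phi^*$ identifies $\C[Y]$ with the semigroup algebra $\C[R]$, where $R\subset\Z^m$ is generated by the rows of $B$. By the anti-equivalence recalled above, $R$ is the semigroup of $Y$, and the normalisation of $Y$ is $\operatorname{spec}\C[R_{\mathrm{sat}}]$ with $R_{\mathrm{sat}}=\tau\cap\Z^m$, where $\tau=\operatorname{cone}(r_1,\dots,r_n)$.

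The step I expect to be the main obstacle is showing $R_{\mathrm{sat}}=\N^m$, since this is where one must recognise that passing between the columns and the rows of $B$ is a Gale-type duality converting the smoothness of $\sigma$ into $\tau$ being the positive orthant. The computation I would carry out is $\tau^\vee=\{y\in\mathbb R^m:\langle r_j,y\rangle\ge 0\ \text{for all }j\}=\{y:By\in\mathbb R^n_{\ge 0}\}$. As $B$ maps $\mathbb R^m$ isomorphically onto $V$ and $V\cap\mathbb R^n_{\ge 0}=\sigma=B(\mathbb R^m_{\ge 0})$, injectivity of $B$ gives $By\in\sigma$ precisely when $y\in\mathbb R^m_{\ge 0}$; hence $\tau^\vee=\mathbb R^m_{\ge 0}$. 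Dualising, $\tau=(\tau^\vee)^\vee=\mathbb R^m_{\ge 0}$, so $R_{\mathrm{sat}}=\tau\cap\Z^m=\N^m$, and the normalisation of $Y$ is $\operatorname{spec}\C[\N^m]=\C^m$, as claimed. The only points needing care are the two uses of $\ker B=0$ (to get a lattice basis, hence $m=d$ and $\sigma$ smooth, and to make $B$ injective in the dual-cone identification) and the observation that a full-dimensional cone's lattice points generate $L$.
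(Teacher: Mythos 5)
Your proof is correct and takes essentially the same route as the paper's: identify the affine semigroup of $Y$ as the one generated in $\Z^m$ by the rows of $B$ (you do this via the monomial parametrisation $t\mapsto t^B$ and the torus isomorphism, where the paper uses exact sequences of lattices built from a basis matrix $C$ for $\Lambda$), and then show that the dual of the cone spanned by the rows of $B$ is the positive orthant, using injectivity of $B$ together with the fact that its columns generate $\ker A\cap\N^n$. The differences are purely presentational (real cones and a dominant monomial map versus rational cones and split exact sequences).
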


\begin{proof}
Let $C$ be a $p\times n$ matrix whose rows form a $\Z$-basis of $\Lambda$.  Then $CB=0$.  Consider the short exact sequence
\[ \xymatrix{ 0 \ar[r] & \Z^p \ar[r]^{C^t} & \Z^n \ar[r] & \Z^n/\Lambda \ar [r] & 0.} \]
It is exact at $\Z^p$ because the columns of $C^t$ are linearly independent.  It splits since $\Z^n/\Lambda$ is free, so $C^t$ has a left inverse.  Hence $C$ has a right inverse.   Since the columns of $B$ form the Hilbert basis of $\ker A\cap \N^n$, we have $\ker A\cap\N^n=B(\N^m)$.  Since $\ker A$ contains a vector with strictly positive integer entries, it follows that $\ker A=\im B\subset\Z^n$.  Also, $\ker C=\ker A$ since $A$ and $C$ have the same row space $\Lambda$.  Therefore the sequence
\[ \xymatrix{ 0 \ar[r] & \Z^m \ar[r]^B & \Z^n \ar[r]^C & \Z^p \ar [r] & 0} \]
is exact.  The dual sequence
\[ \xymatrix{ 0 \ar[r] & \Z^p \ar[r]^{C^t} & \Z^n \ar[r]^{B^t} & \Z^m \ar [r] & 0} \]
is also exact, so $\ker B^t=\Lambda$.  

Hence $Y$ is defined by the affine semigroup $\Sigma = B^t(\N^n) \subset \Z^m$ spanned over $\N$ by the rows $b_1,\ldots,b_n$ of $B$.  The normalisation of $Y$ is given by the semigroup $\tau\cap\Z^m$, where $\tau$ is the rational polyhedral cone $\Q_+ b_1+\cdots+\Q_+ b_n$. 

Let us determine the dual cone $\sigma=\tau^\vee=\{v\in\Q^m:Bv\in\Q_+^n\}$.  Clearly, $\Q_+^m\subset\sigma$.  Conversely, say $v\in\sigma\cap\Z^m$.  Then $Bv\in\N^n$ and $ABv=0$, so by the Hilbert basis property, there is $w\in\N^m$ such that $Bv=Bw$.  Since $B$ is injective, $v=w\in\N^m$.  Thus $\sigma\cap\Z^m=\N^m$, so $\sigma\subset\Q_+^m$, and $\sigma=\Q_+^m$.  Hence the normalisation of $Y$ is defined by the affine semigroup $\tau \cap \Z^m = \sigma^\vee \cap \Z^m = \N^m$, so the normalisation is $\C^m$.
\end{proof}

We now consider the case when the normalisation of $Y$ is not $\C^d$, so $\ker B\subset\Z^m$ is nontrivial.  We choose once and for all an $m\times\ell$ matrix $E=(e_{i\lambda})$ with integer entries whose columns form a $\Z$-basis for $\ker B\subset\Z^m$.  Then $BE=0$.

The easiest way to extend a holomorphic map $f:S\to Y$ to a holomorphic map $X\to Y$ would be to find holomorphic functions $g_1,\ldots,g_m$ on $S$ such that
\[ f_j=g_1^{b_{j1}}\cdots g_m^{b_{jm}} \]
for $j=1,\ldots,n$, and extend $g_1,\ldots,g_m$ to holomorphic functions $G_1,\ldots,G_m$ on $X$.  Setting
\[ F_j=G_1^{b_{j1}}\cdots G_m^{b_{jm}}, \]
we get a holomorphic map $F:X\to Y$ extending $f$.  Such a factorisation of $f_1,\ldots,f_n$ need not exist.  However, a weaker kind of factorisation exists when $f$ is a nondegenerate map from a factorial space.

A \textit{twisted factorisation} of a holomorphic map $f:W\to Y$, where $W$ is a complex space, consists of line bundles $N_1,\ldots,N_\ell$ on $W$ and sections $\sigma_1,\ldots,\sigma_m$ of the bundles 
\[ L_i=N_1^{e_{i1}}\otimes\cdots\otimes N_\ell^{e_{i\ell}}, \qquad i=1,\ldots,m, \]
such that with respect to the canonical trivialisations of the bundles
\[ M_j = L_1^{b_{j1}}\otimes\cdots\otimes L_m^{b_{jm}}, \]
we have
\[ f_j=\sigma_1^{b_{j1}}\otimes\cdots\otimes \sigma_m^{b_{jm}}, \qquad j=1,\ldots,n. \]
(The above formulas may be abbreviated as $L=N^E$, $M=L^B$, and $f=\sigma^B$.)  Each bundle $M_j$ has a canonical trivialisation because it is a tensor product of the factors $N_\lambda^{b_{j1}e_{1\lambda}+\cdots+b_{jm}e_{m\lambda}}$, $\lambda=1,\ldots,\ell$, with $b_{j1}e_{1\lambda}+\cdots+b_{jm}e_{m\lambda}=0$.  Every choice of local trivialisations of $N_1,\ldots,N_\ell$ induces the same local trivialisation of $M_j$ and thus defines a global trivialisation of $M_j$.  In other words, every choice of defining cocycles for $N_1,\ldots,N_\ell$ induces the trivial cocycle as a defining cocycle for $M_j$.  In this way, a section of $M_j$ is identified with a function.

Conversely, if a nondegenerate holomorphic map $f:W\to\C^n$ has a twisted factorisation, then $f_1^{a_{\nu 1}}\cdots f_n^{a_{\nu n}}=1$ on $W$ for $\nu=1,\ldots,k$ since $AB=0$, so $f$ maps $W$ into~$Y$.

\begin{theorem}  \label{t:twisted-factorisation}
Let $W$ be a factorial complex space and $Y\subset \C^n$ be a nnn affine toric variety of dimension $d$ with $0\in Y$, whose normalisation is not $\C^d$.  Every nondegenerate holomorphic map $f:W\to Y$ has a twisted factorisation.
\end{theorem}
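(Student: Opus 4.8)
The plan is to build the twisted factorisation directly out of the divisors of the component functions $f_1,\dots,f_n$, using factoriality to convert divisorial data into line bundles and the combinatorics of the Hilbert basis matrix $B$ to organise the exponents. The geometric difficulty of extracting ``monomial roots'' of $f$ never appears explicitly: factoriality turns it into a purely lattice-theoretic solvability question.

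First I would pass to divisors. Since $W$ is factorial it is normal and locally irreducible, and by nondegeneracy each $f_j$ is nonzero on every component, so it has a well-defined divisor $\operatorname{div}f_j=\sum_P(\operatorname{ord}_P f_j)\,P$ summed over the prime divisors $P$ of $W$. Set $d_P=(\operatorname{ord}_P f_1,\dots,\operatorname{ord}_P f_n)\in\N^n$. The toric relations $f^\ell=1$ for $\ell\in\Lambda$, valid meromorphically because $f$ maps into $Y$, give $\ell\cdot d_P=\operatorname{ord}_P(f^\ell)=0$, so $d_P\in\Lambda^\perp\cap\Z^n=\ker A=\im B$; as $d_P\in\N^n$ as well, it lies in $\ker A\cap\N^n$, which by the Hilbert basis property is $B(\N^m)$. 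Hence I can pick $c_P\in\N^m$ with $Bc_P=d_P$, taking $c_P=0$ whenever $d_P=0$. The effective divisors $\Delta_i=\sum_P(c_P)_i\,P$ are then locally finite and satisfy $\sum_i b_{ji}\Delta_i=\operatorname{div}f_j$.

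Next I would manufacture the bundles. Factoriality makes each prime divisor locally principal, so each $\Delta_i$ is Cartier and yields a line bundle $L_i=\mathscr O_W(\Delta_i)$ with a canonical holomorphic section $\sigma_i$ of divisor $\Delta_i$. Then $\bigotimes_i L_i^{b_{ji}}=\mathscr O_W(\operatorname{div}f_j)$ is trivial, trivialised by $f_j$, and under this trivialisation $\sigma^B_j=\prod_i\sigma_i^{b_{ji}}$ becomes a holomorphic function $h_j$ with $\operatorname{div}h_j=\operatorname{div}f_j$, so $h_j=u_jf_j$ for a unit $u_j\in\mathscr O^*(W)$. In particular $B[L]=0$ in $\operatorname{Pic}(W)^m$, where $[L]=([L_1],\dots,[L_m])$. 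Because $\ker B=\im E$ is a saturated subgroup of $\Z^m$ and $\im B$ is saturated in $\Z^n$, both $\ker B\hookrightarrow\Z^m$ and $B$ split over $\Z$, so for every abelian group $Q$ one has $\ker(B\colon Q^m\to Q^n)=\im(E\colon Q^\ell\to Q^m)$. Applying this with $Q=\operatorname{Pic}(W)$ produces line bundles $N_1,\dots,N_\ell$ with $L_i\cong N^E_i$, which is exactly the form a twisted factorisation demands, and makes each $M_j=L^B_j=N^{BE}_j$ canonically trivial since $BE=0$.

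Finally I would correct the unit discrepancy $\sigma^B=h=u\,f$. For $\ell\in\Lambda=\ker B^t$ the total exponent of each $\sigma_i$ in $h^\ell$ is $(B^t\ell)_i=0$, so $h^\ell=1$ and therefore $u^\ell=h^\ell/f^\ell=1$ for all $\ell\in\Lambda$; in particular $Cu=0$ in $\mathscr O^*(W)^p$. Since $C$ is surjective with $\ker C=\im B$, the sequence $\Z^m\xrightarrow{B}\Z^n\xrightarrow{C}\Z^p\to0$ is exact, and tensoring the right-exact functor with $\mathscr O^*(W)$ yields exactness of $\mathscr O^*(W)^m\xrightarrow{B}\mathscr O^*(W)^n\xrightarrow{C}\mathscr O^*(W)^p$; hence $u\in\im B$, i.e.\ there are units $w_i$ with $\prod_i w_i^{b_{ji}}=u_j$. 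Replacing $\sigma_i$ by $w_i^{-1}\sigma_i$, still a holomorphic section of $L_i$, multiplies $h_j$ by $u_j^{-1}$ and gives $\sigma^B_j=f_j$ for all $j$, completing the twisted factorisation. I expect the main obstacle to be the bookkeeping that upgrades the numerical identity $\sum_i b_{ji}\Delta_i=\operatorname{div}f_j$ to an equality of sections of the \emph{constrained} bundles $L=N^E$: one must verify that the canonical trivialisations of the $M_j$ are mutually compatible enough for the cancellations $h^\ell=1$ to hold exactly, and that the two distinct exactness inputs — the formal splitting $\ker B=\im E$ used at the level of $\operatorname{Pic}(W)$, and the right-exactness of $B,C$ used at the level of global units — are each applied to the right group. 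Factoriality, as opposed to mere normality, is precisely what is needed to guarantee that the $\Delta_i$ are Cartier and that $Bc_P=d_P$ is solvable in $\N^m$.
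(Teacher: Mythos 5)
Your proof is correct and follows essentially the same route as the paper's: decompose the divisors of the $f_j$ into prime components, lift the coefficient vectors through $B$ via the Hilbert basis property, use factoriality to realise the resulting divisors as line bundles $L_i$ with sections $\sigma_i$, solve $L=N^E$ using the split exactness of $0\to\Z^\ell\to\Z^m\to\im B\to 0$, and finally absorb the residual unit $u=\sigma^B/f$ using the exactness of $\Z^m\stackrel{B}{\to}\Z^n\stackrel{C}{\to}\Z^p\to 0$ after tensoring with $\O^*(W)$. The only differences are cosmetic: you phrase the two exactness steps abstractly, via $\operatorname{Pic}(W)$ and the right-exactness of the tensor product, where the paper uses $H^2(W,\Z)$ and an explicit integer matrix $P$ with $QC-I_n=BP$ to produce $\xi=\eta^{-P}$.
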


\begin{proof}
Write $((f_1),\ldots,(f_n))=\sum v_\gamma Z_\gamma$, where the hypersurfaces $Z_\gamma$ are the irreducible components of the zero sets of $f_1,\ldots,f_n$ and $v_\gamma\in\N^n$.  Since $f_1^{a_{\nu 1}}\cdots f_n^{a_{\nu n}}=1$ for $\nu=1,\ldots,k$, we have $v_\gamma\in\ker A$ for each $\gamma$.  Choose $u_\gamma\in\N^m$ with $B u_\gamma=v_\gamma$ and set $(D_1,\ldots,D_m)=\sum u_\gamma Z_\gamma$.

Since $W$ is factorial, every divisor on $W$ is locally principal (that is, every Weil divisor is a Cartier divisor).  Thus there is an open cover $(U_\alpha)$ of $W$ and holomorphic functions $g_{\alpha 1},\ldots,g_{\alpha m}$ on $U_\alpha$ such that $(g_{\alpha i})=D_i\vert U_\alpha$.  Then, for $j=1,\ldots,n$,
\[ (g_{\alpha 1}^{b_{j1}}\cdots g_{\alpha m}^{b_{jm}}) = \sum_i b_{ji} D_i = \sum_{i,\gamma} b_{ji} u_{\gamma i}Z_\gamma = \sum_\gamma v_{\gamma j} Z_\gamma = (f_j) \]
on $U_\alpha$.  For $i=1,\ldots,m$, the cocycle $(g_{\alpha i}/g_{\beta i})$ defines a line bundle $L_i$ on $W$.  The cochain $(g_{\alpha i})$ defines a section $\sigma_i$ of $L_i$.  For $j=1,\ldots,n$, let $M_j=L_1^{b_{j1}}\otimes\cdots\otimes L_m^{b_{jm}}$.  Then $M_j$ has the section $\sigma_1^{b_{j1}}\otimes\cdots\otimes\sigma_m^{b_{jm}}$ with divisor $(f_j)$, so $M_j$ is trivial.  

Since $M_1,\ldots,M_n$ are trivial, $L=(L_1,\ldots,L_m)$ is in the kernel of the map 
\[ H^2(W,\Z)\otimes\Z^m\to H^2(W,\Z)\otimes\im B \] 
defined by $B$.  Since the columns of $E$ are linearly independent, the sequence
\[ \xymatrix{ 0 \ar[r] & \Z^\ell \ar[r]^E & \Z^m \ar[r]^B & \im B \ar[r] & 0} \]
is exact, and since $\im B\subset\Z^n$ is free, it splits.  Thus the induced sequence
\[ \xymatrix{ 0 \ar[r] & H^2(W,\Z)\otimes\Z^\ell \ar[r]^E & H^2(W,\Z)\otimes\Z^m \ar[r]^B & H^2(W,\Z)\otimes\im B \ar[r] & 0} \]
is also exact, so $L=N^E$ for some $N=(N_1,\ldots,N_\ell)$.

Let $\eta_j=f_j^{-1}\sigma_1^{b_{j1}}\otimes\cdots\otimes\sigma_m^{b_{jm}}\in\mathscr O^*(W)$, $j=1,\ldots,n$.  We will show that there are $\xi_1,\ldots,\xi_m\in\mathscr O^*(W)$ such that $\eta_j=\xi_1^{b_{j1}}\cdots\xi_m^{b_{jm}}$, that is, $\eta=\xi^B$.  Then we simply replace $\sigma_i$ by $\xi_i^{-1}\sigma_i$.

As in the proof of Lemma \ref{l:trivial-kernel}, let $C=(c_{\mu j})$ be a $p\times n$ matrix whose rows form a $\Z$-basis of $\Lambda$.  Let the $n\times p$ matrix $Q=(q_{j\mu})$ with integer entries be a right inverse for $C$.  Then $C(QC-I_n)=0$.  Since $\ker C=\im B\subset\Z^n$, there is an $m\times n$ matrix $P$ with integer entries such that $QC-I_n=BP$.  

Let $\xi=\eta^{-P}$.  Then $\eta=\eta^{QC-BP}=\xi^B\cdot(\eta^C)^Q$, where the dot denotes componentwise multiplication.  We have $f^C=(1,\ldots,1)$ since $f$ maps into $Y$, and $(\sigma_1^{b_{j1}}\otimes\cdots\otimes\sigma_m^{b_{jm}})_{j=1,\ldots,n}^C=(1,\ldots,1)$ since $CB=0$, so $\eta^C=(1,\ldots,1)$ and $\eta=\xi^B$.
\end{proof}

The following corollary is immediate.

\begin{corollary}  \label{c:untwisted-factorisation}
Let $W$ be a factorial complex space on which every holomorphic line bundle is trivial.  Let $Y\subset \C^n$ be a nnn affine toric variety of dimension $d$ with $0\in Y$, whose normalisation is not $\C^d$.  For every nondegenerate holomorphic map $f:W\to Y$, there is a holomorphic map $g:W\to\C^m$ such that $f=g^B$.
\end{corollary}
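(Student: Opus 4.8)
The plan is to invoke Theorem \ref{t:twisted-factorisation} and then simply trivialise every line bundle that appears, reducing the twisted factorisation to an honest product of functions. Since $W$ is factorial and $Y$ is a nnn affine toric variety of dimension $d$ with $0\in Y$ whose normalisation is not $\C^d$, the theorem applies to the nondegenerate map $f$ and yields a twisted factorisation: line bundles $N_1,\ldots,N_\ell$ on $W$, the associated bundles $L_i=N_1^{e_{i1}}\otimes\cdots\otimes N_\ell^{e_{i\ell}}$, and sections $\sigma_1,\ldots,\sigma_m$ with $\sigma_i$ a section of $L_i$, such that $f_j=\sigma_1^{b_{j1}}\otimes\cdots\otimes\sigma_m^{b_{jm}}$ with respect to the canonical trivialisation of $M_j=L_1^{b_{j1}}\otimes\cdots\otimes L_m^{b_{jm}}$, for $j=1,\ldots,n$.

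By hypothesis, every holomorphic line bundle on $W$ is trivial, so in particular each $N_\lambda$ is trivial. First I would choose a global trivialisation of each $N_\lambda$. These choices induce global trivialisations of the $L_i$ through the relation $L=N^E$, and in turn global trivialisations of the $M_j$ through $M=L^B$. Using the induced trivialisation of $L_i$, each section $\sigma_i$ becomes a holomorphic function $g_i:W\to\C$, giving the desired map $g=(g_1,\ldots,g_m):W\to\C^m$.

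The one point that needs checking — and the only thing separating the twisted factorisation from the untwisted one — is that the trivialisation of $M_j$ induced from the chosen trivialisations of the $N_\lambda$ coincides with the canonical trivialisation of $M_j$ that appears in the statement of the twisted factorisation. This is precisely what the construction of the canonical trivialisation provides: because $BE=0$, the exponent $b_{j1}e_{1\lambda}+\cdots+b_{jm}e_{m\lambda}$ of $N_\lambda$ in $M_j$ vanishes for every $\lambda$, and every choice of local trivialisations of the $N_\lambda$ induces the same (canonical) trivialisation of $M_j$. Hence, with respect to compatible trivialisations, the identity $f_j=\sigma_1^{b_{j1}}\otimes\cdots\otimes\sigma_m^{b_{jm}}$ reads off as the genuine product of holomorphic functions $f_j=g_1^{b_{j1}}\cdots g_m^{b_{jm}}$, that is, $f=g^B$, which completes the argument. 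I expect no real obstacle here, as the corollary is essentially Theorem \ref{t:twisted-factorisation} specialised to a $W$ with no nontrivial line bundles; the only care required is the bookkeeping of trivialisations just described.
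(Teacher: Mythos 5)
Your proposal is correct and follows exactly the route the paper intends: the paper simply declares the corollary ``immediate'' from Theorem \ref{t:twisted-factorisation}, and your argument supplies precisely the bookkeeping (global trivialisations of the $N_\lambda$ inducing the canonical trivialisations of the $M_j$, since the exponents $b_{j1}e_{1\lambda}+\cdots+b_{jm}e_{m\lambda}$ vanish) that makes this immediate. The key point you single out---that the induced trivialisation of $M_j$ agrees with the canonical one---is the right and only thing to check.
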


The next theorem characterises extendability of nondegenerate maps.

\begin{theorem}  \label{t:characterisation}
Let $S$ be a factorial subvariety of a factorial Stein space $X$.  Let $Y\subset \C^n$ be a nnn affine toric variety of dimension $d$ with $0\in Y$, whose normalisation is not $\C^d$, and let $f:S\to Y$ be a nondegenerate holomorphic map.  Then $f$ extends to a holomorphic map $X\to Y$ if and only if $f$ has a twisted factorisation such that $N_1,\ldots,N_\ell$ extend to line bundles on $X$.
\end{theorem}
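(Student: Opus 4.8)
\section*{Proof proposal}

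The statement is an equivalence, and I would prove the two implications separately; the real content lies in the \lq\lq if\rq\rq\ direction. For the \lq\lq only if\rq\rq\ direction, suppose $f$ extends to a holomorphic map $F\colon X\to Y$. Before invoking the machinery I would remove a degeneracy issue. Since $S$ is factorial, hence locally irreducible, each of its components is irreducible and therefore lies in a single component of $X$. Let $X'$ be the union of the components of $X$ that contain a component of $S$; then $S\subseteq X'$, and each component of $X'$ contains a component of $S$ on which $f$, and hence $F$, is nondegenerate. Thus $F|_{X'}\colon X'\to Y$ is nondegenerate and, $X'$ being a factorial complex space, Theorem \ref{t:twisted-factorisation} yields a twisted factorisation of $F|_{X'}$ with line bundles $N_1',\dots,N_\ell'$ on $X'$. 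Restricting this factorisation to $S$ produces a twisted factorisation of $f$, and the bundles $N_i:=N_i'|_S$ extend to $X$: take $N_i'$ on $X'$ and, say, the trivial bundle on $X\setminus X'$.

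For the \lq\lq if\rq\rq\ direction, suppose $f$ has a twisted factorisation with line bundles $N_1,\dots,N_\ell$ and sections $\sigma_i$ of $L_i$, where $L=N^E$ and $f=\sigma^B$, and suppose each $N_i$ extends to a line bundle $\hat N_i$ on $X$. I would form $\hat L=\hat N^E$ on $X$; since $\hat N_\lambda|_S=N_\lambda$, each $\hat L_i$ restricts to $L_i$ on $S$. The crux is to extend the sections. As $X$ is Stein and $\hat L_i$ is coherent, the restriction sequence $0\to\mathscr I_S\hat L_i\to\hat L_i\to\hat L_i|_S\to0$ together with Cartan's Theorem B, giving $H^1(X,\mathscr I_S\hat L_i)=0$, shows that $H^0(X,\hat L_i)\to H^0(S,L_i)$ is surjective. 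Hence each $\sigma_i$ extends to a global section $\hat\sigma_i$ of $\hat L_i$.

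It remains to reassemble the $\hat\sigma_i$ into a map. I would form $\hat M_j=\hat L_1^{b_{j1}}\otimes\cdots\otimes\hat L_m^{b_{jm}}$; because $BE=0$, each $\hat M_j$ is canonically trivial exactly as in the definition of a twisted factorisation, so $F_j:=\hat\sigma_1^{b_{j1}}\otimes\cdots\otimes\hat\sigma_m^{b_{jm}}$ is a well-defined holomorphic function on $X$. The canonical trivialisations and the restrictions are compatible and $\hat\sigma_i|_S=\sigma_i$, so $F_j|_S=f_j$ and $F=(F_1,\dots,F_n)$ extends $f$. Finally $F$ maps into $Y$: for each row $\ell$ of $A$ the section $\prod_j F_j^{\ell^+_j}$ is identified, via the canonical trivialisations, with $\bigotimes_i\hat\sigma_i^{\sum_j \ell^+_j b_{ji}}$, and likewise with $\ell^-$ in place of $\ell^+$; since $\sum_j\ell_j b_{ji}=(\ell B)_i=0$ (because $AB=0$) the two exponent vectors coincide, so $\prod_j F_j^{\ell^+_j}=\prod_j F_j^{\ell^-_j}$. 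Thus $F$ satisfies the binomial generators of the ideal defining $Y$, which is the converse observation recorded before Theorem \ref{t:twisted-factorisation} (verified here directly so as not to assume $F$ nondegenerate).

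The main obstacle is the section-extension step in the \lq\lq if\rq\rq\ direction: the hypothesis supplies only extensions of the line bundles $N_i$, and these must be promoted to extensions of the global sections $\sigma_i$, which is precisely where the Stein hypothesis enters through Cartan's Theorem B. A lesser point needing care is the degeneracy bookkeeping in the \lq\lq only if\rq\rq\ direction, since an arbitrary extension $F$ need not be nondegenerate on components of $X$ that miss $S$.
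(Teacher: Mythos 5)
Your proof is correct and takes essentially the same approach as the paper: extend the bundles and sections over the Stein space $X$ in the \lq\lq if\rq\rq\ direction, and in the \lq\lq only if\rq\rq\ direction repair the possible degeneracy of $F$ on components of $X$ missing $S$ before applying Theorem \ref{t:twisted-factorisation} (the paper replaces $F$ by the constant map $(1,\dots,1)$ there, while you restrict to $X'$ and extend the bundles trivially --- equivalent fixes). Your direct verification that $F$ satisfies the binomial generators, rather than citing the remark before Theorem \ref{t:twisted-factorisation} which presupposes nondegeneracy, is a slightly more careful rendering of the same step.
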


\begin{proof}
Suppose that $f$ has a twisted factorisation such that $N_1,\ldots,N_\ell$ extend to line bundles $\tilde N_1,\ldots,\tilde N_\ell$ on $X$.  Denote the corresponding extension of $L_i$ by $\tilde L_i$.  Extend $\sigma_i$ to a section $\tilde\sigma_i$ of $\tilde L_i$.  We obtain a holomorphic extension $F:X\to\C^n$ of $f$ with $F_j=\tilde\sigma_1^{b_{j1}} \otimes\cdots\otimes \tilde\sigma_m^{b_{jm}}$.  Because $F$ has a twisted factorisation, $F(X)\subset Y$.

Conversely, if $f:S\to Y$ is nondegenerate and extends to a holomorphic map $F:X\to Y$, then $F$ is nondegenerate on the connected components of $X$ that intersect $S$.  In addition, by possibly changing $F$ (to, say, the constant nondegenerate map $x \mapsto (1,\ldots ,1) \in Y$) on any connected component of $X$ that does not intersect $S$, we may assume that $F$ is nondegenerate.  Now take a twisted factorisation for $F$ and restrict it to $S$.
\end{proof}

The following corollary gives Theorem \ref{t:main-positive} under the additional assumptions that $f$ is nondegenerate and $0\in Y$.

\begin{corollary}  \label{c:sufficient-for-nondegenerate}
Let $S$ be a factorial subvariety of a reduced Stein space $X$.  Let $Y\subset \C^n$ be a nnn affine toric variety with $0\in Y$, and let $f:S\to Y$ be a nondegenerate holomorphic map.  If the restriction map $H^2(X,\Z)\to H^2(S,\Z)$ is surjective, in particular if $H^2(S,\Z)=0$, then $f$ extends to a holomorphic map $X\to Y$.
\end{corollary}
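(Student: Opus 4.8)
The plan is to split into two cases according to whether the normalisation of $Y$ is $\C^d$, and in the interesting case to reduce extendability to a purely cohomological question about line bundles by means of the twisted factorisation. First I would dispose of the case where the normalisation of $Y$ is $\C^d$: since $S$ is factorial it is in particular normal, so Theorem \ref{t:first-dichotomy}(a) applies verbatim and extends $f$, in fact with no cohomological hypothesis at all. Hence from now on I assume the normalisation of $Y$ is not $\C^d$, which is exactly the regime in which Theorems \ref{t:twisted-factorisation} and \ref{t:characterisation} operate.

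Since $S$ is factorial, Theorem \ref{t:twisted-factorisation} supplies a twisted factorisation of $f$: line bundles $N_1,\ldots,N_\ell$ on $S$ together with sections $\sigma_1,\ldots,\sigma_m$ of the associated bundles $L_i=N_1^{e_{i1}}\otimes\cdots\otimes N_\ell^{e_{i\ell}}$ satisfying $f=\sigma^B$. By the sufficiency half of Theorem \ref{t:characterisation}, it then suffices to show that each $N_\lambda$ is the restriction of a holomorphic line bundle on $X$. I would emphasise here that this direction of Theorem \ref{t:characterisation} uses only that $X$ is Stein — the extended sections $\tilde\sigma_i$ of $\tilde L_i$ are produced by Cartan's extension theorem — and does \emph{not} require $X$ to be factorial. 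This is the point that lets me run the argument even though in the corollary $X$ is merely assumed reduced Stein.

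The heart of the matter is thus the extension of the line bundles $N_\lambda$, and this is where the hypothesis on $H^2$ enters. Both $S$ and $X$ are Stein, a closed subvariety of a Stein space being Stein, so for each of them the exponential sheaf sequence $0\to\Z\to\O\to\O^*\to 0$ together with Cartan's Theorem B (which kills $H^1$ and $H^2$ with coefficients in $\O$) yields an isomorphism $\operatorname{Pic}(\cdot)\cong H^2(\cdot,\Z)$ via the first Chern class. These isomorphisms are natural with respect to the inclusion $S\hookrightarrow X$, so the holomorphic restriction $\operatorname{Pic}(X)\to\operatorname{Pic}(S)$ is identified with the topological restriction $H^2(X,\Z)\to H^2(S,\Z)$, which is surjective by assumption. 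Therefore every line bundle on $S$, in particular each $N_\lambda$, extends to a line bundle on $X$; feeding these extensions into the sufficiency argument of Theorem \ref{t:characterisation} yields the desired holomorphic extension $F:X\to Y$ of $f$.

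The main obstacle is precisely this line bundle extension step: everything hinges on knowing that holomorphic line bundles on the Stein spaces $S$ and $X$ are classified by their topological first Chern classes, so that the purely topological surjectivity of $H^2(X,\Z)\to H^2(S,\Z)$ can be upgraded to the holomorphic statement that each $N_\lambda$ extends. The remaining ingredients are either direct citations (Theorems \ref{t:first-dichotomy}(a), \ref{t:twisted-factorisation}, and \ref{t:characterisation}) or the routine but essential observation that the sufficiency half of Theorem \ref{t:characterisation} requires only Steinness, not factoriality, of $X$.
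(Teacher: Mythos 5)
Your proposal is correct and follows essentially the same route as the paper: the same case split on whether the normalisation of $Y$ is $\C^d$, Theorem \ref{t:first-dichotomy}(a) in the first case, and Theorem \ref{t:twisted-factorisation} combined with the sufficiency half of the proof of Theorem \ref{t:characterisation} (which, as you correctly note, does not need factoriality of $X$) in the second. Your explicit justification of the line bundle extension step via the Oka--Grauert identification $\operatorname{Pic}\cong H^2(\cdot,\Z)$ on Stein spaces is exactly the point the paper leaves implicit, and it is the correct place where the surjectivity of $H^2(X,\Z)\to H^2(S,\Z)$ is used.
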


\begin{proof}
If the normalisation of $Y$ is $\C^d$, then the corollary follows from Theorem \ref{t:first-dichotomy}(a).  If the normalisation of $Y$ is not $\C^d$, then the corollary follows from Theorem \ref{t:twisted-factorisation} and the proof of Theorem \ref{t:characterisation}, where factoriality of $X$ was not needed for the implication that is relevant here.
\end{proof}

We can reformulate the characterisation of extendibility of nondegenerate maps without reference to twisted factorisations as follows.

\begin{corollary}  \label{c:reformulation}
Let $S$ be a factorial subvariety of a factorial Stein space $X$.  Let $Y\subset \C^n$ be a nnn affine toric variety of dimension $d$ with $0\in Y$, whose normalisation is not $\C^d$, and let $f:S\to Y$ be a nondegenerate holomorphic map.  Then $f$ extends to a holomorphic map $X\to Y$ if and only if there is an $m$-tuple $D$ of effective divisors on $S$ such that $BD=(f)$ and $c_1(D)\in H^2(S,\Z)^m$ is the restriction of an element of $\ker B\subset H^2(X,\Z)^m$.
\end{corollary}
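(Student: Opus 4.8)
The plan is to read the corollary off from Theorem \ref{t:characterisation} by translating the data of a twisted factorisation into the language of divisors and Chern classes. The key background fact is that on a reduced Stein space $Z$ one has $H^q(Z,\O)=0$ for $q\geq 1$, so the exponential sequence identifies the Picard group $H^1(Z,\O^*)$ with $H^2(Z,\Z)$ via the first Chern class. This applies both to $X$ and to the closed subvariety $S$, which is itself Stein. In particular, a line bundle on $S$ extends to $X$ if and only if its class in $H^2(S,\Z)$ is the restriction of a class in $H^2(X,\Z)$.

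First I would set up the dictionary between twisted factorisations and $m$-tuples of effective divisors. Given a twisted factorisation $(N,\sigma)$ of $f$, each $\sigma_i$ is a section of $L_i$ that, by nondegeneracy of $f$ and the fact that every column of $B$ is a nonzero vector in $\N^n$, does not vanish identically on any component of $S$; hence $D_i=(\sigma_i)$ is a well-defined effective divisor. From $f=\sigma^B$ one gets $(f)=(\sigma^B)=BD$, and from $L=N^E$ one gets $c_1(D)=c_1(L)=E\,c_1(N)$. Conversely, given effective divisors $D$ with $BD=(f)$, the construction in the proof of Theorem \ref{t:twisted-factorisation} (with the $D_i$ in place of the divisors produced there, using factoriality of $S$ to realise each $L_i=\O(D_i)$ with a section $\sigma_i$ of divisor $D_i$, and the unit adjustment $\eta=\xi^B$ to arrange $f=\sigma^B$) yields a twisted factorisation whose sections have divisors $D_i$.

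Next I would convert the condition ``$N_1,\ldots,N_\ell$ extend to $X$'' into the stated cohomological one. Since line bundles on the Stein spaces $S$ and $X$ are classified by their Chern classes, the $N_\lambda$ extend to $X$ exactly when $c_1(N)\in H^2(S,\Z)^\ell$ is the restriction of some $\tilde n\in H^2(X,\Z)^\ell$. In the forward direction, $c_1(D)=E\,c_1(N)=(E\tilde n)|_S$, and $E\tilde n$ lies in $\ker B\subset H^2(X,\Z)^m$ because $BE=0$; this exhibits $c_1(D)$ as the restriction of an element of $\ker B$. In the converse direction, if $c_1(D)=\kappa|_S$ with $\kappa\in\ker B\subset H^2(X,\Z)^m$, I would use the split exact sequence $0\to\Z^\ell\xrightarrow{E}\Z^m\xrightarrow{B}\im B\to 0$, which remains exact after tensoring with $H^2(X,\Z)$ since $\im B$ is free, to write $\kappa=E\tilde n$ for some $\tilde n\in H^2(X,\Z)^\ell$. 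The bundles $N_\lambda$ on $X$ with $c_1(N_\lambda)=\tilde n_\lambda$ then restrict to bundles on $S$ realising $L=N^E$ (as $E(\tilde n|_S)=c_1(D)=c_1(L)$ and Chern classes determine line bundles on $S$), and the dictionary of the previous step supplies the matching sections. Feeding this back into Theorem \ref{t:characterisation} gives the equivalence.

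The main obstacle, and the point that requires the most care, is the converse passage from the cohomological hypothesis to an actual twisted factorisation with extendable bundles: one must check that the splitting $\ker B=\im E$ over $\Z$ survives tensoring with $H^2(X,\Z)$, that the chosen $\tilde n$ produces bundles $N_\lambda$ on $S$ with $N^E\cong L$ at the level of line bundles and not merely Chern classes (here the Stein identification of $\operatorname{Pic}$ with $H^2$ is essential), and that the section-level discrepancy can be absorbed by the unit adjustment $\eta=\xi^B$ exactly as in the proof of Theorem \ref{t:twisted-factorisation}. By contrast, the observation that $c_1(D)\in\ker B$ already holds on $S$ --- because $BD=(f)$ is principal --- is automatic and plays no role beyond consistency; the real content is the existence of a lift of $c_1(D)$ to $\ker B$ over $X$.
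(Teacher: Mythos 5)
Your proposal is correct and follows essentially the same route as the paper: the paper's (very terse) proof likewise reduces the corollary to Theorem \ref{t:characterisation}, with the main point being that a given $m$-tuple $D$ with $BD=(f)$ yields a twisted factorisation with $(\sigma)=D$ by the construction in the proof of Theorem \ref{t:twisted-factorisation}, the extendability of the $N_\lambda$ then translating into the stated condition on $c_1(D)$ via the split exact sequence $0\to\Z^\ell\stackrel{E}{\to}\Z^m\stackrel{B}{\to}\im B\to 0$ and the identification of $\operatorname{Pic}$ with $H^2(\cdot,\Z)$ on Stein spaces. Your write-up simply makes explicit the Chern-class bookkeeping that the paper leaves implicit.
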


The main point of the proof is that if $D$ exists, then a twisted factorisation of $f$ with $(\sigma)=D$ is obtained as in the proof of Theorem \ref{t:twisted-factorisation}.

Next we prove Theorem \ref{t:first-dichotomy}(b).  We need the following two lemmas.

\begin{lemma}  \label{l:annulus}
The domain
\[ S=\{z\in\C^2:\tfrac 3 4<\lvert z_i\rvert<\tfrac 5 4,\ i=1,2\} \]
contains mutually disjoint smooth connected curves $W_1$, $W_2$, $Z_1$, $Z_2$ such that
\[ c_1(W_1)=c_1(W_2)=-c_1(Z_1)=-c_1(Z_2)\neq 0 \]
in $H^2(S,\Z)=\Z$. 
\end{lemma}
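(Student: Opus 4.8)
The plan is to use the fact that $S$ is homotopy equivalent to the torus $(S^1)^2$, so that $H^2(S,\Z)\cong\Z$, and that $S$ is Stein. First I would record, from the exponential sheaf sequence together with the vanishing $H^1(S,\O)=H^2(S,\O)=0$ on the Stein manifold $S$, that $c_1\colon\operatorname{Pic}(S)\to H^2(S,\Z)=\Z$ is an isomorphism. Thus a curve $W\subset S$ has $c_1(W)\neq 0$ precisely when the line bundle $\O_S(W)$ is nontrivial, equivalently when $W$ is not the divisor of a global holomorphic function. In particular every curve cut out in $S$ by a single global equation has $c_1=0$, so the curves we seek must be genuinely transcendental divisors, and the task is to produce connected, smooth, mutually disjoint representatives of a generator of $H^2(S,\Z)$ and of its negative.

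To exhibit a generator I would construct an explicit nontrivial line bundle $L$ on $S$ by a factor of automorphy. Writing $z_1=e^{2\pi i\tau}$ presents the first annulus $A_1=\{3/4<|z_1|<5/4\}$ as the quotient of a horizontal strip in $\tau$ by $\tau\mapsto\tau+1$, and I let $L$ be the bundle whose sections over $S$ are the holomorphic functions $s(\tau,z_2)$ on the product of the strip with $A_2$ satisfying the automorphy relation $s(\tau+1,z_2)=z_2\,s(\tau,z_2)$. Restricting the clutching data to the distinguished torus $T=\{|z_1|=|z_2|=1\}$, which generates $H_2(S,\Z)$, shows that $\langle c_1(L),[T]\rangle$ equals the winding number of $z_2$ on $\{|z_2|=1\}$, namely $1$; hence $c_1(L)$ is a generator. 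Sections of $L$ are then produced by a theta-type Laurent series $\Theta(\tau,z_2)=\sum_{k\in\Z}\phi(\tau-k)\,z_2^k$ with a Gaussian $\phi(t)=e^{-\epsilon\pi t^2}$; the shift identity $\Theta(\tau+1,z_2)=z_2\,\Theta(\tau,z_2)$ is immediate, and the parameter $\epsilon$ is tuned so that the zeros of $z_2\mapsto\Theta(\tau,z_2)$ sweep through the annulus $A_2$.

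The heart of the argument --- and the step I expect to be the main obstacle --- is to verify that $W:=\{\Theta=0\}\cap S$ is a single smooth connected curve, closed in $S$, with $c_1(W)=1$. Here I would run the argument principle in the $z_2$-variable: for each fixed $z_1$ the number of zeros of $\Theta(z_1,\cdot)$ in $A_2$ equals the difference $\frac{1}{2\pi}\log(5/3)\big/\epsilon$ of the dominant Laurent exponents on the two boundary circles, which for $\epsilon$ large enough is either $0$ or $1$. As $z_1$ traverses $A_1$ the single zero enters $A_2$ through one boundary circle, crosses it, and exits through the other, tracing one properly embedded arc whose ends approach $\partial S$; crucially $W$ is therefore \emph{not} a graph over all of $A_1$, which is exactly what makes it non-principal and consistent with $c_1(W)=1$. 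Choosing $\epsilon$ so that this arc occupies only a short angular sector of $A_1$, and checking that the relevant zeros are simple, gives the desired smooth connected curve. This control of the zero locus is the only genuinely delicate point.

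Finally I would assemble the four curves. Taking $W_1,W_2$ to be the zero divisors of $\Theta(\tau,z_2)$ and of the shifted section $\Theta(\tau-\tfrac14,z_2)$, both sections of $L$, gives $c_1(W_1)=c_1(W_2)=1$; taking $Z_1,Z_2$ to be the zero divisors of the analogous section $\sum_k\phi(\tau-k)z_2^{-k}$ and its $\tfrac34$-shift of the dual bundle $L^{-1}$ (automorphy multiplier $z_2^{-1}$) gives $c_1(Z_1)=c_1(Z_2)=-1$, so that $c_1(W_1)=c_1(W_2)=-c_1(Z_1)=-c_1(Z_2)\neq 0$ in $H^2(S,\Z)=\Z$. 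Because each of the four arcs can be made to lie over a different angular sector of $A_1$ via the shifts $0,\tfrac14,\tfrac12,\tfrac34$, the four curves are automatically mutually disjoint; alternatively, disjointness follows softly from the fact that the intersection number of any two of these complex curves lies in $H^4(S)=H^4((S^1)^2,\Z)=0$, so by positivity of intersections of complex curves generic sections already have disjoint zero sets.
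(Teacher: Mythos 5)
Your overall strategy (exhibit a nontrivial line bundle on $S$, produce holomorphic sections of it and of its dual, and take their zero divisors) is a genuinely different and considerably heavier route than the paper's, and as written it has two real gaps. First, the step you yourself flag as delicate --- that the zero locus of the theta series is a \emph{single, smooth, connected} curve confined to a short angular sector --- is exactly where all the content of the lemma lives, and it is not carried out: you need a uniform dominant-term/Rouch\'e argument (uniform in $\im\tau$, which varies over a nondegenerate interval since $3/4<\lvert z_1\rvert<5/4$), a verification that the zeros are simple, and an argument that the arc traced out is connected rather than breaking into several components; note also that $c_1$ of the full zero \emph{divisor} of a section of $L$ is automatically $c_1(L)$, but if that divisor were disconnected the lemma's conclusion for an individual component would not follow. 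Second, your fallback disjointness argument is incorrect: the vanishing of $H^4(S,\Z)$ does not prevent two noncompact complex curves from meeting --- two lines in $\C^2$ have trivial Chern classes and zero ``intersection number'' in $H^4(\C^2)=0$, yet they intersect. Positivity of intersections only converts a \emph{compactly supported} intersection count into a disjointness statement, and no such count is available here. Your primary disjointness argument also has a slip: as described, $W_1$ and $Z_1$ both use the shift $0$, so they lie over the same sector and their disjointness is not addressed; and the sector for the $z_2^{-k}$ series is not obviously the same as, or disjoint from, that of the $z_2^{k}$ series without redoing the dominant-term analysis.

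For comparison, the paper's proof is elementary and avoids all of this: it takes the \emph{principal} divisor $D_\epsilon=\{z\in S:z_1-z_2=1+\epsilon\}$, observes that for $-1/2<\epsilon<1/2$ it splits into the two connected graphs $D_\epsilon^\pm=\{z\in D_\epsilon:\pm\im z_1>0\}$ (the two lens-shaped components of the intersection of the $z_1$-annulus with its translate), each smooth and closed in $S$, each nonprincipal by the computation in Range's book, with $c_1(D_\epsilon^+)=-c_1(D_\epsilon^-)\neq 0$ since their sum is principal. Disjointness for distinct values of $\epsilon$ is automatic because the defining equations are incompatible, and the equality $c_1(D_{\epsilon_1}^+)=c_1(D_{\epsilon_2}^+)$ is obtained by pigeonholing the uncountable family $\{D_\epsilon^+\}$ into the countable group $H^2(S,\Z)$. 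Every step there is a one-line verification, whereas your construction, even once repaired, would require a page of quantitative estimates. I would recommend either completing the zero-locus analysis in detail and fixing the shifts so that all four sectors are genuinely distinct, or switching to the split-principal-divisor construction.
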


\begin{proof}
As shown in \cite[Section VI.5.2]{Range1986}, if we only wanted disjoint divisors $W$ and $Z$ on $S$ such that $c_1(W)=-c_1(Z)\neq 0$, we could decompose the principal divisor $D=\{z\in S:z_1-z_2=1\}$ into the divisors $W=\{z\in D:\im z_1>0\}$ and $Z=\{z\in D:\im z_1<0\}$, which are not principal.  In the same way, the principal divisor $D_\epsilon=\{z\in S:z_1-z_2=1+\epsilon\}$, $-\tfrac 1 2<\epsilon<\tfrac 1 2$, splits into disjoint nonprincipal divisors $D_\epsilon^+$ and $D_\epsilon^-$, which are easily seen to be connected (look at the intersection of the annulus $\{\zeta\in\C:\tfrac 3 4<\lvert \zeta \rvert<\tfrac 5 4\}$ with itself translated to the left by $1+\epsilon$).  Since $H^2(S,\Z)$ is countable, there are distinct $\epsilon_1$ and $\epsilon_2$ such that $c_1(D_{\epsilon_1}^+)=c_1(D_{\epsilon_2}^+)$.  Then $W_1=D_{\epsilon_1}^+$, $W_2=D_{\epsilon_2}^+$, $Z_1=D_{\epsilon_1}^-$, and $Z_2=D_{\epsilon_2}^-$ have the desired properties.
\end{proof}

\begin{lemma}  \label{l:minimal}
Let $A$ be a $k\times n$ matrix with integer entries such that $K=\ker A\cap\N^n$ contains a vector with strictly positive entries.  Let $B$ be the $n\times m$ matrix whose columns form the Hilbert basis of $K$.  Suppose that $\ker B\subset\Z^m$ is not trivial.  Then there is a vector $v\in K\setminus\{0\}$ such that $v=Bw=Bz$ for two distinct vectors $w, z\in\N^m$, and $v$ is minimal in the sense that if $v'\in K\setminus\{0\}$, $v'\leq v$, and $v'\neq v$, then there is at most one $w'\in\N^m$ with $Bw'=v'$.  Moreover, if $w,z\in\N^m$ are distinct and $v=Bw=Bz$, then $\supp w\cap\supp z=\varnothing$.
\end{lemma}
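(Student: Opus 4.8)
The plan is to exploit the componentwise partial order $\leq$ on $\N^n$, which is well-founded because $v\mapsto\sum_j v_j$ strictly decreases along any descending chain. Throughout I would lean on one elementary observation: since the columns of $B$ are nonzero vectors in $\N^n$, a nonnegative combination $Bc$ with $c\in\N^m$ vanishes only for $c=0$.

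First I would manufacture a single value in $K$ with a nonunique representation. As $\ker B$ is nontrivial, choose $u\in\ker B\setminus\{0\}$ and write $u=u^+-u^-$ with $u^+,u^-\in\N^m$ of disjoint support. Then $Bu^+=Bu^-$; these representations are distinct since $u\neq 0$, and the common value $v_0=Bu^+$ lies in $K\setminus\{0\}$ by the observation above (note $u^+\neq 0$, for otherwise $u=-u^-$ and $Bu^-=0$ would force $u^-=0$). I would then let $\mathcal V$ denote the set of $v\in K\setminus\{0\}$ admitting two distinct representations $Bw=Bz=v$ with $w,z\in\N^m$. Since $v_0\in\mathcal V$, this set is nonempty, so by well-foundedness it has a $\leq$-minimal element $v$ (for instance one minimising $\sum_j v_j$). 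The minimality clause of the lemma is then automatic: a strictly smaller $v'\in K\setminus\{0\}$ with two representations would lie in $\mathcal V$ below $v$, contradicting minimality.

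The real content is the final, ``moreover'' assertion, and this is where I expect the main obstacle. Given distinct $w,z\in\N^m$ with $v=Bw=Bz$, I would argue by contradiction, assuming $\supp w\cap\supp z\neq\varnothing$. The idea is to cancel the common part: set $c=\min(w,z)$ coordinatewise, which is nonzero by assumption, and put $w'=w-c$, $z'=z-c$. These lie in $\N^m$, have disjoint support, stay distinct (as $w'-z'=w-z$), and give $Bw'=Bz'=v-Bc=:v'$. By the observation $Bc\neq 0$, so $v'\leq v$ with $v'\neq v$, and $v'\neq 0$ since otherwise $w'=z'=0$ would contradict $w'\neq z'$. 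Thus $v'\in\mathcal V$ sits strictly below $v$, contradicting the minimality of $v$ and forcing $\supp w\cap\supp z=\varnothing$. The one point requiring care is verifying that $v'$ is simultaneously nonzero and a genuine member of $\mathcal V$, since the entire argument hinges on feeding $v'$ back into the minimality of $v$.
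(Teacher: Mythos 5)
Your proof is correct and follows essentially the same route as the paper: establish $\ker B\cap\N^m=\{0\}$, produce a doubly represented $v_0=Bu^+=Bu^-$ from a nonzero $u\in\ker B$, take a $\leq$-minimal such $v$, and obtain the disjoint-support claim by cancelling a common part and contradicting minimality. The only (harmless) difference is in the last step: the paper subtracts a single unit vector $e\leq\min(w,z)$ and invokes the minimality of the Hilbert basis to see that $w-e$ and $z-e$ are nonzero, whereas you subtract all of $\min(w,z)$ and instead rule out $v'=0$ via the injectivity of $B$ on $\N^m$; both work.
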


\begin{proof}
Since $K$ contains a vector with strictly positive entries, no row of $B$ consists of zeros only, so $\ker B\cap\N^m=\{0\}$.  Let $u\in\ker B\subset\Z^m$, $u\neq 0$.  Then $B u^+=B u^-\in K\setminus\{0\}$.  Thus there is a vector $v\in K\setminus\{0\}$ such that $v=Bw=Bz$ for two distinct vectors $w, z\in\N^m$.  Take $v$ to be a minimal such vector.

Assume $w,z\in\N^m$ are distinct, $v=Bw=Bz$, and $\supp w\cap\supp z\neq\varnothing$.  Then there is a vector $e\in\N^m$ of the form $(\ldots,0,1,0,\ldots)$ such that $e\leq w$ and $e\leq z$.  Now $w\neq e$, for otherwise $B w$ would be a column of $B$ that was expressible as the $\N$-linear combination $B z$ of the columns of $B$, contradicting the Hilbert basis property of the columns of $B$.  Similarly, $z\neq e$, so $w'=w-e$ and $z'=z-e$ are distinct elements of $\N^m\setminus\{0\}$. Then the vector $B w'=B z'$ contradicts the minimality of $v$.
\end{proof}

\begin{proof}[Proof of Theorem \ref{t:first-dichotomy}(b)]
By Lemma \ref{l:trivial-kernel}, $\ker B\subset\Z^m$ is not trivial.  Let $S$, $W_1$, $W_2$, $Z_1$, $Z_2$ be as in Lemma \ref{l:annulus}.  Since every annulus embeds into $\C^2$ \cite{Laufer1973}, $S$ embeds into $\C^4$.  Let $v\neq 0$ in $\ker A\cap\N^n$ and $w\neq z$ in $\N^m$ with $v=Bw=Bz$ be as in Lemma \ref{l:minimal}.  As in the proof of the lemma, we see that $w$ and $z$ are not of the form $(\ldots,0,1,0,\ldots)$.  Hence there are $w_1, w_2, z_1, z_2\in\N^m\setminus\{0\}$ with $w=w_1+w_2$ and $z=z_1+z_2$.

Consider the principal vector-valued divisor $D=BD'$ on $S$, where $D'=w_1 W_1+w_2 W_2 +z_1 Z_1 +z_2 Z_2$.  Find line bundles $L_1,\ldots,L_m$ on $S$ with sections $\sigma_1,\ldots,\sigma_m$ such that $(\sigma)=D'$.  Since $B(\sigma)$ is principal, $L^B$ is trivial, so $L=N^E$ for some $N$ (see the proof of Theorem \ref{t:twisted-factorisation}) and $\sigma^B$ is identified with a holomorphic map $f:S\to\C^n$.  Having a twisted factorisation, $f$ maps $S$ into $Y$.

We claim that the nondegenerate holomorphic map $f:S\to Y$ does not extend to a holomorphic map $\C^4\to Y$.  Suppose it does.  Then, by Corollary \ref{c:untwisted-factorisation}, there are $g_1,\ldots,g_m\in\O(S)$ such that $f_j=g_1^{b_{j1}}\cdots g_m^{b_{jm}}$ for $j=1,\ldots,n$.  Since $(f_j)$ is an $\N$-linear combination of the irreducible curves $W_1$, $W_2$, $Z_1$, $Z_2$, so is each $(g_i)$.  Moreover, since $c_1(W_1)=c_1(W_2)=-c_1(Z_1)=-c_1(Z_2)$, each $(g_i)$ must be an $\N$-linear combination of $W_1+Z_1$, $W_1+Z_2$, $W_2+Z_1$, $W_2+Z_2$.  Hence there are $\alpha_1,\alpha_2,\alpha_3,\alpha_4\in\N^m$ such that
\[ (g)=\alpha_1(W_1+Z_1)+\alpha_2(W_1+Z_2)+\alpha_3(W_2+Z_1)+\alpha_4(W_2+Z_2), \]
so
\[ (f)=B(g)=B(\alpha_1+ \alpha_2)W_1+B(\alpha_3+\alpha_4)W_2+B(\alpha_1+\alpha_3)Z_1+B(\alpha_2+\alpha_4)Z_2. \]
Since $(f)=D$, it follows that
\[ B(\alpha_1+ \alpha_2)=Bw_1, \ B(\alpha_3+\alpha_4)=Bw_2, \ B(\alpha_1+\alpha_3)=Bz_1, \ B(\alpha_2+\alpha_4)=Bz_2. \]
Now let $\alpha=\alpha_1+\alpha_2+\alpha_3+\alpha_4$.  Then $B\alpha=Bw=Bz=v$.  By Lemma \ref{l:minimal}, $\supp\alpha\cap\supp w=\varnothing$ or $\supp\alpha\cap\supp z=\varnothing$.  Say the latter holds.  Then $\alpha_1+\alpha_3\neq z_1$.  Let $v'=Bz_1\leq v$.  Then $v'\neq 0$ and $v'\neq v$ because $Bz_1\neq 0$ and $Bz_2\neq 0$.  Also, $v'=B(\alpha_1+\alpha_3)$, contradicting the minimality of $v$.
\end{proof}

By Lemma \ref{l:trivial-kernel}, if $\ker B$ is trivial, then the normalisation of $Y$ is $\C^m$, so $m=d$.  By Theorem \ref{t:first-dichotomy}(a) and the proof of Theorem \ref{t:first-dichotomy}(b), if $\ker B$ is not trivial, then the normalisation of $Y$ is not $\C^d$.

Now we prove Theorem \ref{t:second-dichotomy}.

\begin{proof}[Proof of Theorem \ref{t:second-dichotomy}]
(a)  Suppose that $Y$ is locally irreducible.  Then $\C^d$ is also the seminormalisation of $Y$.  Let $S$ be a seminormal subvariety of a reduced Stein space $X$, and let $f:S\to Y$ be a holomorphic map.  By functoriality of seminormalisation, $f$ lifts to a map $S\to \C^d$, which extends to a map $X\to\C^d$.  Postcomposing by the normalisation map gives a holomorphic extension $X\to Y$ of $f$.

(b)  Suppose that $Y$ is not locally irreducible.  Let $\phi:\C^m\to Y$, $t\mapsto t^B$.  As shown in the proof of Lemma \ref{l:trivial-kernel}, the saturation of the semigroup in $\Z^m$ spanned over $\N$ by the rows of $B$ is $\N^m$.  Hence, for each $i=1,\ldots,m$, $B$ has a row of the form $(\ldots,0,N_i,0,\ldots)$ with $N_i\geq 1$ in the $i^\textrm{th}$ place, so the monomial $t_i^{N_i}$ is a component of $\phi$.  Hence $\phi$ is proper and finite.  Thus the image of $\phi$ is a subvariety of $Y$ of dimension $m$.  Since $Y$ itself has dimension $m$, it follows that $\phi$ is surjective.

We claim that $\phi\vert\phi^{-1}((\C^*)^n)$ is injective.  It then follows that $\phi$ is the normalisation map of $Y$.  Since $t_1^{N_1},\ldots,t_m^{N_m}$ are components of $\phi$, $\phi^{-1}((\C^*)^n)=(\C^*)^m$.  Because the components of $\phi$ are monomials, it suffices to show that if $\phi(t)=(1,\ldots,1)\in Y\subset\C^n$, then $t=(1,\ldots,1)\in\C^m$.  Write $t=\exp(2\pi i v)$ with $v\in\C^m$ and set $w=Bv$.  Then $t^B=(1,\ldots,1)$ implies that $w\in\Z^n$.  Also, $Cw=CBv=0$, so there is $v'\in\Z^m$ with $w=Bv'$.  Since $B$ is injective, $v=v'$ and $t=\exp(2\pi iv')=(1,\ldots,1)$, proving our claim.

For $i=1,\ldots,m$, let $\phi^i:\C\to Y$, $s\mapsto\phi(\ldots,0,s,0,\ldots)$, with $s$ in the $i^\textrm{th}$ place.  Each component of $\phi^i$ is a power of $s$ or $0$.  Since the components of $\phi$ are monomials, if $\phi^i$ is injective for all $i$, then $\phi$ is injective.  Since $Y$ is not locally irreducible, its normalisation map $\phi$ is not injective, so $\phi^i$ is not injective for some $i$.  Write $\phi^i(s)=\psi(s^k)$ with $k\geq 2$ and $\psi=(\psi_1,\ldots,\psi_n):\C\to Y$ injective.  

Consider the holomorphic map $f:\C\times\{0\}\cup\{(0,1)\}\to Y$, $f(s,0)=\psi(s)$, $f(0,1)=(1,\ldots,1)$.  Note that $f$ is degenerate.  Namely, if none of the components of $f(\cdot,0)=\psi$ is identically zero, then $m=1$ because $B$ does not have a column of zeros, but then $\phi$ is injective.

Suppose that $f$ extends to a holomorphic map $F:\C^2\to Y$.  Then $F$ is nondegenerate, so by Corollary \ref{c:untwisted-factorisation}, there are $g_1,\ldots,g_m\in\O(\C)$ such that $\psi=F(\cdot,0)=g^B=\phi(g_1,\ldots,g_m)$ on $\C$, that is, 
\[ \phi(g_1(s^k),\ldots,g_m(s^k)) = \psi(s^k) = \phi(\ldots,0,s,0,\ldots) \]
for all $s\in\C$.  Recall that $t_i^{N_i}$ is a component of $\phi$.  It follows that $g_i(s^k)^{N_i}=s^{N_i}$ for all $s\in\C$, which is absurd.
\end{proof}

Note that we have in fact shown that the degenerate holomorphic map $\C\times\{0\}\to Y$, $(s,0)\mapsto\psi(s)$, does not extend to a nondegenerate holomorphic map $\C^2\to Y$.

Taking $Y$ to be Whitney's umbrella, defined by the equation $x^2 y=z^2$ in $\C^3$, we see that the source $\C\times\{0\}\cup\{(0,1)\}$ must be disconnected in general.  A degenerate holomorphic map $f$ into $Y$ from a connected source $S\subset X$ maps into either $Y\cap\{x=z=0\}$ or $Y\cap\{y=z=0\}$, both of which are biholomorphic to $\C$, so $f$ extends to $X$.

To complete the proof of Theorem \ref{t:main-positive}, we need the following lemma.

\begin{lemma}  \label{l:make-nondegenerate}
Let $Y$ be a nnn affine toric variety.  Let $W$ be an irreducible reduced complex space.  If $f:W\to Y$ is a holomorphic map, then there exists a nnn toric subvariety $Z$ of $Y$ such that $f$ is a nondegenerate map into $Z$.
\end{lemma}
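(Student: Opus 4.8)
The plan is to discard the coordinates on which $f$ vanishes identically and recognise what remains as a toric variety sitting in a coordinate subspace. Write $Y\subset\C^n$ as the zero set of a prime lattice ideal with lattice $\Lambda\subset\Z^n$, and let $s_1,\dots,s_n\in\Sigma$ be the semigroup generators with $Y=\operatorname{spec}\C[\Sigma]$, so that the $j$th coordinate on $Y$ is the character $\chi^{s_j}$. Because $W$ is irreducible, each component $f_j$ either vanishes identically or has a proper analytic zero set. Put $J=\{j:f_j\not\equiv0\}$; then $W'=\{w\in W:f_j(w)\neq0\text{ for all }j\in J\}$ is dense and open in $W$, and for every $w\in W'$ the support $\{k:f_k(w)\neq0\}$ of the point $f(w)$ is exactly $J$.

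I would then take $Z$ to be the subvariety cut out in the coordinate subspace $\C^J=\{x:x_j=0\text{ for }j\notin J\}$ by the lattice ideal of $\Lambda_J:=\Lambda\cap\Z^J$, equivalently $Z=\operatorname{spec}\C[\Sigma_J]$ with $\Sigma_J$ the subsemigroup generated by $\{s_j:j\in J\}$. That $Z$ is a nnn affine toric variety is quick: the inclusion $\Z^J\hookrightarrow\Z^n$ induces an injection $\Z^J/\Lambda_J\hookrightarrow\Z^n/\Lambda$, and a subgroup of the torsion-free group $\Z^n/\Lambda$ is torsion-free, so the lattice ideal of $\Lambda_J$ is again prime. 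Granting $Z\subset Y$, the map $f$ lands in $Z$ and is nondegenerate there: since $f_j\equiv0$ for $j\notin J$ we have $f(W)\subset\C^J$, and every $\Lambda$-relation supported on $J$ is exactly a $\Lambda_J$-relation, so $f(W)\subset Z$; moreover $f(W')$ already lies in the torus $Z\cap(\C^*)^J$ while no $f_j$, $j\in J$, is identically zero, so $f$ is nondegenerate into $Z$.

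The main obstacle, and the only step with real content, is showing that $Z$ really is contained in $Y$, i.e. that imposing $x_j=0$ for $j\notin J$ creates no spurious points outside $Y$. This hinges on $J$ not being arbitrary. Viewing a point $y\in Y$ as a monoid homomorphism $\gamma:\Sigma\to(\C,\,\cdot\,)$, its support $\{k:\gamma(s_k)\neq0\}$ is controlled by the face $F=\gamma^{-1}(\C^*)$, and a short computation gives $F=\langle s_k:s_k\in F\rangle$ (any expression $a=\sum c_k s_k$ of $a\in F$ uses only generators in $F$). Hence the support of any point of $Y$ is the generator-index set of a face of $\Sigma$. Applying this to $f(w_0)$ for some $w_0\in W'$ identifies $J$ as such a set, so $\Sigma_J=F$ is a face and $\Sigma\setminus F$ is a semigroup ideal.

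With $\Sigma_J$ a face, I would finish as follows. Given $v\in\Lambda$, i.e. $\sum_k v_k^+ s_k=\sum_k v_k^- s_k=:b$, either $\supp v\subseteq J$, in which case $v\in\Lambda_J$ and the corresponding binomial already holds on $Z$, or some $v_j^+>0$ with $j\notin J$. In the latter case $b\in\Sigma\setminus F$ by the ideal property, so $\supp v^-$ must also meet the complement of $J$ (otherwise $b\in\Sigma_J=F$); thus both monomials of the binomial vanish on $Z$ and the relation reads $0=0$. Hence every defining relation of $Y$ holds on $Z$, giving $Z\subset Y$ (indeed $Z=\overline{O_F}$, the closure of the torus orbit through $f(w_0)$). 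The remaining density and nondegeneracy bookkeeping is then routine.
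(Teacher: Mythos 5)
Your proof is correct, but it takes a genuinely different route from the paper's. The paper's argument is soft and short: $Y$ is partitioned into finitely many torus orbits, so the preimages partition $W$, one of them has closure with nonempty interior, and by the identity theorem $f(W)$ lies in the Zariski closure $Z$ of the corresponding orbit $O$ and meets $O$; the fact that such an orbit closure is a nnn toric subvariety with torus $O$ is then simply cited from \cite{Mustata2004}. You construct the same $Z$ by hand: you identify the common support $J$ of the generic image point, use the description of points of $Y$ as monoid homomorphisms $\Sigma\to(\C,\cdot)$ to show that $J$ indexes the generators of a face $F$ of $\Sigma$, and then verify directly---via the observation that $\Sigma\setminus F$ is a semigroup ideal, so any defining binomial of $Y$ not supported in $J$ has \emph{both} monomials vanishing on $\C^J$---that the zero set of the lattice ideal of $\Lambda\cap\Z^J$ is a toric subvariety equal to $Y\cap\C^J=\overline{O_F}$. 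In effect you reprove, in the special case needed, the orbit--face correspondence that the paper outsources to the literature. What your version buys is a self-contained argument phrased entirely in the paper's own language of prime lattice ideals (including the clean primality check via the injection $\Z^J/\Lambda_J\hookrightarrow\Z^n/\Lambda$); the cost is length, though the bookkeeping you defer at the end (density of $W'$, the identification $f(W)\subset Y\cap\C^J=Z$, nonemptiness of $Z\cap(\C^*)^J$, and the degenerate case $J=\varnothing$, where $Z$ is the fixed point) is indeed routine and handled identically, if implicitly, by both arguments.
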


\begin{proof}
Now $Y$ is partitioned into finitely many torus orbits.  Hence $W$ is partitioned into the preimages of finitely many torus orbits.  The Hausdorff closure in $W$ of one of the preimages must have nonempty interior.  Then, by the identity theorem, $f(W)$ lies in the Zariski closure $Z$ of the corresponding torus orbit $O$, and $f(W)$ intersects $O$.  Now $Z$ is a nnn toric subvariety of $Y$ whose torus is $O$ \cite[page 8]{Mustata2004}, so it follows that $f$ is nondegenerate as a map into $Z$.
\end{proof}

\begin{proof}[Proof of Theorem \ref{t:main-positive}]
We may assume that $S$ is connected and hence irreducible.  By Lemma \ref{l:make-nondegenerate}, we may also assume that $f$ is nondegenerate.

Since $f$ is nondegenerate, it maps the complement of a thin subvariety of $S$ into $Y\cap(\C^*)^n$, which is smooth, so $f$ maps only a thin subvariety of $S$ into the non-normal locus of $Y$.  Hence $f$ lifts to a map $g$ into the normalisation $\tilde Y$ of $Y$.  By the structure theory of normal affine toric varieties, $\tilde Y=Y'\times (\C^*)^r$, where $r\geq 0$ and $Y'$ is a normal affine toric variety embeddable in $\C^s$ in such a way that $0\in Y'$.  Since $H^1(X,\Z)\to H^1(S,\Z)$ is surjective, the map $S\stackrel{g}{\to}\tilde Y\to(\C^*)^r$ extends to $X$.  

We need to show that the map $g':S\stackrel{g}{\to}\tilde Y\to Y'$ is nondegenerate.  Then the proof is complete by Corollary \ref{c:sufficient-for-nondegenerate}.  In other words, we need the image of $g'$ to intersect the torus in $Y'$.  It does because the image of $f$ intersects the torus in $Y$, the normalisation map from $\tilde Y$ to $Y$ restricts to a biholomorphism between their respective tori, and the torus in $\tilde Y$ is the product of $(\C^*)^r$ and the torus in $Y'$.
\end{proof}

\begin{example}  \label{x:cone}
We will now illustrate our results by one of the simplest examples of a normal affine toric variety, the cone $Y = \{z\in\C^3:z_1 z_2 = z_3^2\}$, which has a single, normal singularity at the origin $0$.  It is interesting to note that $Y$ is very close to being an Oka manifold.  It has the structure of a line bundle of degree $-2$ over the projective line with the zero section blown down.  Hence $Y$ with $0$ removed is an Oka manifold.  Likewise, $Y$ with $0$ blown up is an Oka manifold.  Moreover, $Y$ is the quotient of $\C^2$ by the action of $\Z_2$ that identifies $(z,w)$ with $(-z,-w)$, so $Y$ has the Oka manifold $\C^2$ as a two-sheeted covering space branched over a single point.

The matrices $A$, $B$, and $E$ associated to $Y$ are
\[ A = \left[ \begin{array}{ccc} 1 & 1 & -2 \end{array} \right], \quad
   B = \left[ \begin{array}{ccc} 2 & 0 & 1 \\ 0 & 2 & 1 \\ 1 & 1 & 1 \end{array} \right], \quad
   E = \left[ \begin{array}{c}   1 \\ 1 \\ -2 \end{array} \right].  \]
A twisted factorisation of a holomorphic map $f:S \to Y$, where $S$ is a factorial subvariety of a factorial Stein space $X$, involves sections $\sigma_1$, $\sigma_2$, $\sigma_3$ of line bundles $L_1$, $L_2$, $L_3$.  Since $M = L^B = (L_1^2 L_3, L_2^2 L_3, L_1 L_2 L_3)$ is a triple of trivial line bundles, $L = N^E$ for some line bundle $N$, that is, $(L_1, L_2, L_3) = (N, N, N^{-2})$.  A local frame $e$ for $N$ induces the frames $e$, $e$, $e^{-2}$ for $L_1$, $L_2$, $L_3$, which in turn induce the frames $e^2 e^{-2}$, $e^2 e^{-2}$, $e e e^{-2}$ for $M_1$, $M_2$, $M_3$, which clearly are independent of the choice of $e$.  With respect to these frames, the twisted factorisation is of the form
\[  f = (\sigma_1^2 \sigma_3, \sigma_2^2 \sigma_3, \sigma_1 \sigma_2 \sigma_3). \]
By Theorem \ref{t:twisted-factorisation}, every nondegenerate holomorphic map $f:S\to Y$ has such a factorisation, which is in fact easy to see directly in this case.  Namely, the divisor of $f$ has the form $(2 D_1 + D_3, 2D_2 + D_3, D_1 + D_2 + D_3)$, where $D_1$, $D_2$, $D_3$ are effective divisors.  For $i=1, 2, 3$, let $L_i$ be the line bundle defined by $D_i$, and let $\sigma_i$ be a section of $L_i$ with divisor $D_i$.  We then obtain a twisted factorisation of $f$ by multiplying $\sigma_1$, $\sigma_2$, $\sigma_3$ by suitable invertible holomorphic functions.  By Theorem \ref{t:characterisation}, $f$ extends to a map $X \to Y$ if and only if there is a twisted factorisation such that the line bundle $N$ extends to $X$.

Note that only when the common divisors of the components of $f$ have multiplicity at most $1$ is the decomposition $(f) = (2 D_1 + D_3, 2D_2 + D_3, D_1 + D_2 + D_3)$ unique, so usually there are different choices of the line bundle $N$, and it might be the case that some choices of $N$ extend to $X$ and others do not.

By Theorem \ref{t:first-dichotomy}(b), there is a smooth surface $S$ in $\C^4$, biholomorphic to a product of two annuli, and a nondegenerate map $f :S\to Y$ which does not extend to $\C^4$.  Let us describe the construction of such a map more concretely.  The vectors $v$, $w$, $z$ in Lemma \ref{l:minimal} are unique, and they are 
\[ v = \left[ \begin{array}{c}   2 \\ 2 \\ 2 \end{array} \right], \quad
   w = \left[ \begin{array}{c}   1 \\ 1 \\ 0 \end{array} \right], \quad
   z = \left[ \begin{array}{c}   0 \\ 0 \\ 2 \end{array} \right]. \]
Let $D = (2Z_1 + W_1 + W_2, 2Z_2 + W_1 + W_2, Z_1 + Z_2 + W_1 + W_2)$, where $Z_1$, $Z_2$, $W_1$, $W_2$ are as in Lemma \ref{l:annulus}.  Since $c_1(D) = 0$, there is a triple of holomorphic functions $(h_1, h_2, h_3)$ with divisor $D$.  Multiplying $h_1$ by the invertible function $h_3^2/(h_1 h_2)$, we get a holomorphic map $f=(h_3^2/h_2, h_2, h_3):S\to Y$.  We claim that $f$ does not extend to $\C^4$.  If it does, then it factors as $g^B$ by Corollary \ref{c:untwisted-factorisation}.  Then $(g)=(Z_1, Z_2, W_1 + W_2)$, so $Z_1$, $Z_2$, $W_1+W_2$ are principal divisors, contradicting Lemma \ref{l:annulus}.
\end{example}

\begin{proof}[Proof of Proposition \ref{p:approximation}]
The proof is an easy adaptation of the proof of \cite[Theorem 1]{Larusson2005}.  For the reader's convenience we provide the details.  Let $\phi:\Omega\to\C^m$ be a holomorphic embedding (we take a Runge domain to be Stein by definition).  The inclusion $i:\Omega\hookrightarrow X$ factors through the Stein space $M=X\times\C^m$ as $\Omega\stackrel{j}{\to} M \stackrel{\pi}{\to} X$, where $j=(i,\phi)$ is an embedding and $\pi$ is the projection.

By Theorem \ref{t:main-positive}, the holomorphic map $f\circ\pi:j(\Omega)\to Y$ extends to a holomorphic map $h:M\to Y$.  Since $\Omega$ is Runge, we can approximate $\phi:\Omega\to\C^m$ uniformly on compact subsets of $\Omega$ by holomorphic maps $\psi:X\to\C^m$.  Then the maps $h\circ (\id_X, \psi):X\to Y$ are holomorphic and approximate $f$.
\end{proof}

\begin{proof}[Proof of Proposition \ref{p:discrete}]
There is a holomorphic surjection $h:\C^r\to Y$ \cite[Theorem 3]{KT2003}.  Lift the given map $S\to Y$ by $h$, extend the lifting to a holomorphic map $X\to \C^r$ using the Cartan extension theorem, and postcompose by $h$.
\end{proof}

\begin{proof}[Proof of Proposition \ref{p:factorial-implies-smooth}]
Suppose that $Y$ is a factorial affine toric variety.  Then $Y$ is normal, so $Y=Y'\times (\C^*)^r$, where $r\geq 0$ and $Y'$ is an affine toric variety embeddable in $\C^s$ in such a way that $0\in Y'$.  Since $Y$ is factorial, so is $Y'$.  Apply Theorem \ref{t:main-positive} with $X=\C^s$, $S=Y'$, and $f=\id_{Y'}$, using the fact that $Y'$ is contractible.  We conclude that $Y'$ is a holomorphic retract of $\C^s$ and is therefore smooth, so $Y$ is smooth.
\end{proof}

\begin{proof}[Proof of Proposition \ref{p:meromorphic}]
If the normalisation of $Y$ is $\C^d$, then the proposition follows from Theorem \ref{t:first-dichotomy}(a).  Suppose that the normalisation of $Y$ is not $\C^d$.  Using Theorem \ref{t:twisted-factorisation}, choose a twisted factorisation $f_j=\sigma_1^{b_{j1}}\otimes\cdots\otimes \sigma_m^{b_{jm}}$ of $f$ on $S$ with bundles $N_\lambda$, $L_i$, and $M_j$ on $S$ as above.  Let $t_\lambda$ be a nontrivial holomorphic section of $N_\lambda$ over $S$.  Define the meromorphic section $\tau_i=t_1^{e_{i1}}\otimes\cdots\otimes t_\ell^{e_{i\ell}}$ of $L_i$.  Then the meromorphic section $\tau_1^{b_{j1}}\otimes\cdots\otimes \tau_m^{b_{jm}}$ of $M_j$ is identically equal to $1$ in the canonical trivialisation, so the meromorphic functions $g_i=\sigma_i/\tau_i$ satisfy $f_j=g_1^{b_{j1}}\cdots g_m^{b_{jm}}$.  Extend $g_i$ to a meromorphic function $G_i$ on $X$.  Then $(F_1,\ldots,F_n)$, where $F_j=G_1^{b_{j1}}\cdots G_m^{b_{jm}}$, is a meromorphic extension of $f$.
\end{proof}

\end{document}